\documentclass{amsart}

\usepackage{amsmath,amssymb,amsthm,amsfonts,graphicx,color}
\usepackage{amssymb,geometry}

\usepackage{color, graphics}

\usepackage{graphicx}
\usepackage{xcolor}

\usepackage[colorlinks=true]{hyperref}

\makeatletter
\def\@currentlabel{2.1}\label{e:dispaa}
\def\@currentlabel{2.21}\label{e:dispau}
\def\@currentlabel{2.22}\label{e:dispav}
\def\@currentlabel{2.23}\label{e:dispaw}
\def\@currentlabel{2.24}\label{e:dispax}
\def\theequation{\thesection.\@arabic\c@equation}
\makeatother

\usepackage[colorlinks=true]{hyperref}
\hypersetup{linkcolor=black,citecolor=black,filecolor=black,urlcolor=black} 

\newcommand{\N}{\mathbb{N}}

\newcommand{\R}{\mathbb{R}}

\newcommand{\eps}{\varepsilon}

\renewcommand{\theequation}{\thesection.\arabic{equation}}
\newtheorem{definition}{Definition}[section]
\newtheorem{lemma}[definition]{Lemma}
\newtheorem{theorem}[definition]{Theorem}
\newtheorem{proposition}[definition]{Proposition}
\newtheorem{corollary}[definition]{Corollary}
\newtheorem{remark}[definition]{Remark}
\newcommand{\bremark}{\begin{remark} \em}
\newcommand{\eremark}{\end{remark} }

\newcommand{\beq}{\begin{equation}}
\newcommand{\eeq}{\end{equation}}

\def\R {\mathbb{R}}

\def\N {\mathbb{N}}







\begin{document}

\title[Some rigidity results]{Some rigidity results and asymptotic properties for solutions to semilinear elliptic P.D.E.}

\author{Matteo Rizzi} \address[M. ~Rizzi]{Mathematisches Institut, Justus Liebig Universit\"{a}t, Arndtstrasse 2, 35392, Giessen, Germany.}
\email{mrizzi1988@gmail.com}

\author{Panayotis Smyrnelis} \address[P. ~Smyrnelis]{Department of Mathematics, University of Athens, 11584 Athens, Greece}
\email[P. ~Smyrnelis]{smpanos@math.uoa.gr}

\begin{abstract}
We will present some rigidity results for solutions to semilinear elliptic equations of the form $\Delta u=W'(u)$, where $W$ is a quite general potential with a local minimum and a local maximum. We are particularly interested in Liouville-type theorems and symmetry results, which generalise some known facts about the Cahn-Hilliard equation.

\textbf{MSC2020}: 35B06; 35B50; 35B53. \textbf{Keywords}: Liouville theorems, radial symmetry, rigidity, Cahn-Hilliard equation.

\end{abstract}

\maketitle
\section{Introduction and main results}

Our aim is to revisit some asymptotic properties and rigidity results for solutions $u$ to the semilinear elliptic P.D.E. 
\begin{equation}
\label{scalar}
\Delta u(x)= W'(u(x)), \ u  \in C^2( D),   \ D\subset \R^n, \ n\geq 2,
\end{equation}
with a potential $W\in C^{1,1}_{loc}(\R)$. The domains $D\subset \R^n$ we consider are connected open unbounded sets such that
\begin{equation}\label{inrad}
\text{$\forall R>0$, $D$ contains a ball of radius $R$.} 
\end{equation}
We shall also assume that on $D$ the solution $u$ takes its values in a bounded interval where $W$ is monotone. For instance, one may consider the Cahn-Hilliard equation
\begin{equation}\label{cahnhil}
\Delta u=W'(u)=u^3-u+\delta\qquad\text{in $\R^n$,}
\end{equation}
with $|\delta|<\frac{2}{3\sqrt{3}}$, so that the polynomial $f(t)=t^3-t+\delta$ admits exactly three real roots 
$$z_1(\delta)<-\frac{1}{\sqrt{3}}<z_2(\delta)<\frac{1}{\sqrt{3}}<z_3(\delta),$$ 
and is negative on the interval $(z_2(\delta),z_3(\delta))$. This equation was largely studied in the literature. For example, some particular solutions were constructed in \cite{HK1} and \cite{KR}, while some results about radial and cylindrical symmetry of solutions and Liouville type results can be found in \cite{rizzi}. Our starting point is the following theorem.
\begin{theorem}\label{start}[\cite{rizzi}]
Let $n\ge 2$, $\delta\in(-\frac{2}{3\sqrt{3}},\frac{2}{3\sqrt{3}})$ and let $u_\delta$ be a solution to (\ref{cahnhil}) such that
\begin{equation}
u_\delta>z_2(\delta) \qquad \text{outside a ball $B_R\subset\R^N$.}\label{u_pos_ball} 
\end{equation}
\begin{enumerate}
\item If $\delta\in(-\frac{2}{3\sqrt{3}},0]$, then $u\equiv z_3(\delta)$. \label{const_case}
\item If $\delta\in(0,\frac{2}{3\sqrt{3}})$, then $u_\delta$ is radially symmetric (not necessarily constant).\label{rad_case}
\end{enumerate}
\label{th_global_radial}
\end{theorem}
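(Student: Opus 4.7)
The plan is to proceed in three stages: first establish the asymptotic convergence $u_\delta(x)\to z_3(\delta)$ as $|x|\to\infty$, then apply the moving plane method to obtain radial symmetry, and finally, in case (1), exploit the asymmetry $W(z_3(\delta))\le W(z_1(\delta))$ via a Pohozaev-type radial identity.

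For the asymptotics, given any sequence $x_k$ with $|x_k|\to\infty$, the translates $v_k(y):=u_\delta(y+x_k)$ solve the same equation. By standard elliptic estimates and a global $L^\infty$ bound on $u_\delta$ (which follows from sub/supersolution comparisons with the constants $z_1(\delta)$ and $z_3(\delta)$, combined with the assumption $u_\delta>z_2(\delta)$ outside $B_R$), a subsequence converges in $C^2_{\mathrm{loc}}$ to an entire solution $v_\infty$ with $v_\infty\ge z_2(\delta)$. A Liouville-type classification---via Modica's monotonicity inequality together with the fact that $W$ is decreasing on $(z_2(\delta),z_3(\delta))$ and increasing on $(z_3(\delta),\infty)$---forces $v_\infty\equiv z_3(\delta)$, yielding the claimed asymptotic behavior.

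With this in hand, I would implement the moving plane method in each direction $e\in\mathbb{S}^{n-1}$. For $\lambda$ large, the half-space $\Sigma_\lambda=\{x\cdot e>\lambda\}$ lies outside $B_R$, so both $u_\delta$ and its reflection $u_\lambda$ are uniformly close to $z_3(\delta)$. Since $W''(z_3(\delta))=3z_3(\delta)^2-1>0$, the linear operator $-\Delta+c(x)$ arising from the comparison has positive potential near infinity, and the strong maximum principle holds on $\Sigma_\lambda$. Sliding $\lambda$ to its critical value $\bar\lambda$ and invoking the Hopf lemma at the first touching gives symmetry with respect to $\{x\cdot e=\bar\lambda\}$; performing this for every $e$ yields radial symmetry about some center $x_0$, which is the conclusion of case (2).

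For case (1), the extra input is $W(z_3(\delta))\le W(z_1(\delta))$ (equality only at $\delta=0$). Using radial symmetry, the equation reads $u''+\frac{n-1}{r}u'=W'(u)$; multiplying by $u'$ and integrating over $(0,\infty)$, using $u'(0)=0$ and $u'(\infty)=0$, gives
\[
(n-1)\int_0^\infty \frac{u'(r)^2}{r}\,dr \;=\; W(z_3(\delta))-W(u(0)).
\]
Provided one can show $u(0)\ge z_2(\delta)$---so that $W(u(0))\ge W(z_3(\delta))$ by the structure of $W$ on $[z_2(\delta),\infty)$---the right-hand side is nonpositive and the left nonnegative, forcing $u'\equiv 0$ and $u\equiv z_3(\delta)$. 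The principal obstacle is precisely this lower bound $u(0)\ge z_2(\delta)$, i.e.\ excluding \textit{bubble} solutions that dip to the lower well at $z_1(\delta)$. Such bubbles genuinely exist for $\delta>0$ but must be ruled out for $\delta\le 0$; this is typically achieved by an energy comparison with the one-dimensional heteroclinic profile, where the ordering $W(z_3(\delta))\le W(z_1(\delta))$ makes the relevant connecting orbit energetically forbidden.
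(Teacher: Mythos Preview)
Your route for case~(2) --- establish $u_\delta\to z_3(\delta)$ at infinity and then run the moving planes using $W''(z_3(\delta))>0$ --- is precisely what the paper does (this is Theorem~\ref{th-limb2}, via \cite{FMR}), and it is also the method the paper attributes to \cite{rizzi}.

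For case~(1) your plan genuinely diverges from the paper. The paper does \emph{not} first prove radial symmetry and then argue by a radial identity; it proves the Liouville statement directly (Corollary~\ref{cor2} via Theorem~\ref{th2}). Once $\sup_{\R^n}u_\delta=z_3(\delta)$ is known, an energy-minimizing radial barrier $\phi_R$ is slid underneath $u_\delta$ \`a la Berestycki--Caffarelli--Nirenberg \cite[Lemma 3.1]{BCN}, forcing $\inf_{\R^n}u_\delta=z_3(\delta)$. The crucial structural input is that $z_3(\delta)$ is the \emph{global} minimum of $W$ when $\delta\le 0$; no symmetry step is involved. Your Pohozaev approach is a legitimate alternative, arguably more elementary once radial symmetry is in hand, but it costs you the moving-plane step, which the paper's argument avoids entirely.

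More importantly, the ``principal obstacle'' you identify is illusory, and your proposed fix via heteroclinic energy comparison is unnecessary. You do not need $u(0)\ge z_2(\delta)$. What makes
\[
(n-1)\int_0^\infty \frac{u'(r)^2}{r}\,dr \;=\; W\big(z_3(\delta)\big)-W\big(u(0)\big)
\]
nonpositive is $W(u(0))\ge W(z_3(\delta))$, and for $\delta\le 0$ this holds for \emph{every} $u(0)\in[z_1(\delta),z_3(\delta)]$: one has $W(z_1(\delta))\ge W(z_3(\delta))$ exactly when $\delta\le 0$, and since $W$ is increasing on $[z_1(\delta),z_2(\delta)]$ and has its minimum on $[z_2(\delta),z_3(\delta)]$ at $z_3(\delta)$, the bound $W\ge W(z_3(\delta))$ holds on the whole interval. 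So bubbles dipping toward $z_1(\delta)$ are harmless here; the identity already kills them. What you actually need is the a~priori range $z_1(\delta)\le u_\delta\le z_3(\delta)$, and your justification of this (``sub/supersolution comparison with the constants $z_1(\delta)$, $z_3(\delta)$'') is not right as stated, since those constants are exact solutions, not strict barriers. The correct argument passes through boundedness and then a translate-to-the-extremum maximum-principle step.

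A smaller gap: in your asymptotic step you must also exclude $v_\infty\equiv z_2(\delta)$, i.e.\ rule out $\liminf_{|x|\to\infty}u_\delta=z_2(\delta)$. Modica's inequality alone does not do this. The paper handles it via the nondegeneracy $W''(z_2(\delta))<0$ together with \cite[Lemma~3.2]{BCN} (for $n\ge 3$) or the superharmonic lower bound of Lemma~\ref{subharmonic} (for $n=2$), which yields $u_\delta\ge z_2(\delta)+\epsilon$ away from $\partial B_R$.
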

Similar results in the case $\delta=0$ can be found in \cite{Farina-rig, Farina}.\\

The purpose here is to extend Theorem \ref{th_global_radial} to more general non linearities.\\

The proofs in \cite{rizzi} are based on some known symmetry results (see \cite{FMR}) which rely on the moving planes method. A key tool in these methods is the maximum principle, even for unbounded domains (see \cite{BCN}).\\

If
\begin{equation}\label{incl}
u(D)\subset[a,b] \text{, and } W'<0, \text{ on } [a,b) \text{ (with $a,b\in\R$),}
\end{equation}
it is straightforward by Lemma \ref{lem1} below that
\begin{equation}\label{as1}
\lim_{d(x,\partial D)\to\infty}u(x)=b \text{, and } W'(b)=0.
\end{equation}
Thus, we shall focus on the more involved problem where
\begin{subequations}\label{incl2}
\begin{equation}\label{incl2u}
u(D)\subset(a,b], \end{equation}
\begin{equation}\label{incl2w}  W'(a)=W'(b)=0 \text{, and } W'<0, \text{ on } (a,b) \text{ (with $a,b\in\R$).}
\end{equation}
\end{subequations}
If we assume in addition to \eqref{incl2}, the nondegeneracy condition:
\begin{equation}\label{nondeg}
 \text{$W'(s)\leq -C_0 (s-a)$ on $[a,s_0]$, for some $C_0>0$ and $s_0\in (a,b)$,}
\end{equation}
we can apply comparison arguments of Berestycki, Caffarelli, and Nirenberg \cite[Lemma 3.2]{BCN} to deduce that 
\begin{equation}\label{as2}
d(x,\partial D)> \eta \Rightarrow u(x)\geq a+\epsilon, \text{ for some constants  } \eta, \, \epsilon>0.
\end{equation}
Consequently, the asymptotic property \eqref{as1} follows again from Lemma \ref{lem1}.\\

On the other hand, in the degenerate case where \eqref{nondeg} does not hold, the asymptotic behaviour of the solutions may be more involved. In the case where $D$ is the complement of a ball, we can relax condition \eqref{nondeg} by assuming that 
\begin{equation}\label{nondeg2}
\text{$W'(s)\leq -C_0 (s-a)^{\frac{n}{n-2}}$ on $[a,s_0]$, for some $C_0>0$, and $s_0\in (a,b)$,}
\end{equation}
Under assumption (\ref{nondeg2}), we can still prove the asymptotic property \eqref{as1} for solutions provided \eqref{incl2} holds (cf. Proposition \ref{p1b}). However, in the case of potentials such that $$\lim_{u\to a^+}\frac{W'(u)}{(u-a)^p}=-\lambda$$ 
for some $\lambda>0$ and $p>\frac{n}{n-2}$, radial solutions $u:\R^n\to(a,b)$ of \eqref{scalar}  satisfying 
\begin{equation}\label{as3}
 \lim_{|x|\to\infty}u_0(x)=a
\end{equation} 
may exist in dimensions $n\geq 3$ (cf. Lemma \ref{prop1}). Therefore, condition \eqref{nondeg2} is optimal to derive \eqref{as1}, when $D$ is the complement of a ball. For general domains, condition \eqref{nondeg2} is not sufficient to deduce the asymptotic behaviour of the solution. In Proposition \ref{p2b}, we construct a solution of \eqref{scalar} in a dumbbell shaped domain $D\subset \R^2$, such that $u\approx a$ on the one side of the neck, while $u\approx b$ on the other side.

To sum up these results, we now state

\begin{theorem}\label{th1} Let $W\in C^{1,1}_{loc}(\R)$  be a potential satisfying \eqref{incl2w}.
\begin{itemize}
\item[(i)] Assume $u\in C^2(\R^n)$ is a solution of \eqref{scalar} such that $u(\R^n)\subset[a,b]$. Then, when $n=2$, or $n\geq 3$ and \eqref{nondeg2} holds, we have either $u\equiv a$, or $u \equiv b$. Otherwise (when $n\geq 3$ and \eqref{nondeg2} does not hold), 
we have either
$u\equiv a$, or $u \equiv b$, or\footnote{For instance, let $u_0$ be the radial solution provided by Lemma \ref{prop1}. Then, by taking $u(x_1,\ldots,x_n,x_{n+1})=u_0(x_1,\ldots,x_n)$, we can see that \eqref{bcinv} holds.}
\begin{equation}\label{bcinv}
\begin{cases}
u(\R^n)\subset (a,C_W], \text{ for a constant $C_W\in (a,b)$ depending only on $W$},\\
\liminf_{|x|\to\infty}u(x)=a.
\end{cases}
\end{equation}
\item[(ii)] Assume the domain $D$ satisfies \eqref{inrad}, and $u\in C^2(D)$ is a solution of \eqref{scalar} such that $u(D)\subset(a,b]$. Then, we have $\lim_{d(x,\partial D)\to\infty}u(x)=b$, provided that \eqref{nondeg} holds. 
\end{itemize}
\end{theorem}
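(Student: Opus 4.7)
For part (ii), the conclusion is an almost immediate packaging of the tools assembled in the introduction. Under \eqref{nondeg}, the comparison principle of Berestycki--Caffarelli--Nirenberg (\cite[Lemma 3.2]{BCN}), as recalled around \eqref{as2}, gives constants $\eta,\epsilon>0$ with $u\ge a+\epsilon$ on $D_\eta:=\{x\in D:d(x,\partial D)>\eta\}$. On $D_\eta$ the solution takes values in $[a+\epsilon,b]$, an interval on which $W'$ is strictly negative off $b$; this is the setting of \eqref{incl} with $[a,b]$ replaced by $[a+\epsilon,b]$, so Lemma~\ref{lem1} yields $u(x)\to b$ as $d(x,\partial D_\eta)\to\infty$, equivalent to the required $u(x)\to b$ as $d(x,\partial D)\to\infty$.

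For part (i), the preliminary step is a strong-maximum-principle reduction. Since $W\in C^{1,1}_\loc$, the quotient $c_a(x):=W'(u(x))/(u(x)-a)$ is bounded by the local Lipschitz constant of $W'$ on $[a,b]$, and $v_a:=u-a\ge 0$ solves the linear elliptic equation $\Delta v_a=c_a(x)\,v_a$; the SMP then forces either $u\equiv a$ or $u>a$ throughout $\R^n$. A symmetric argument with $v_b:=b-u$ gives either $u\equiv b$ or $u<b$ everywhere. We may therefore assume $a<u<b$ on $\R^n$ and, in each of the three regimes, either reach a contradiction or verify \eqref{bcinv}.

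When $n=2$, the classical Liouville theorem in the plane --- any superharmonic function on $\R^2$ bounded below is constant --- applied to $u$ forces $u$ constant, and then $W'(u)=\Delta u=0$ together with $W'<0$ on $(a,b)$ gives $u\in\{a,b\}$, contradicting $a<u<b$. When $n\ge 3$ and \eqref{nondeg2} holds, we restrict $u$ to $D=\R^n\setminus\overline{B_1}$ so that $u(D)\subset(a,b]$, and apply Proposition~\ref{p1b} to obtain $u(x)\to b$ as $|x|\to\infty$. The superharmonicity of $u$ yields $u(x)\ge \inf_{\partial B_R(x)}u$ for every $R>0$, and passing to the limit $R\to\infty$ gives $u(x)\ge b$ pointwise, whence $u\equiv b$: again a contradiction.

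The remaining regime, $n\ge 3$ with \eqref{nondeg2} failing, is where \eqref{bcinv} itself must be established. The $\liminf$ half repeats the preceding step: were $\inf_{\R^n}u=\alpha>a$, then $u(\R^n)\subset[\alpha,b]$ with $W'<0$ off $b$, Lemma~\ref{lem1} would give $u\to b$, and the mean-value argument would force $u\equiv b$, a contradiction. The genuinely subtle point is the upper bound $\sup_{\R^n}u\le C_W<b$, with $C_W$ depending only on $W$, since the SMP alone delivers only the non-quantitative inequality $u<b$. I would approach it via a blow-up at a sequence $y_k$ with $u(y_k)\to\sup u$: the translates $u(\cdot+y_k)$ converge in $C^2_\loc$ to a solution $v_\infty\colon\R^n\to[a,\sup u]$ attaining its supremum at the origin, and a moving-planes symmetry argument through $0$ (exploiting the interior maximum) should render $v_\infty$ radially symmetric and nonincreasing. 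The ODE classification in Lemma~\ref{prop1}, combined with a Pohozaev-type energy identity for the radial profile, should then pin down a universal ceiling $C_W<b$ beyond which no radial profile decaying to $a$ can start. The main obstacle is justifying the radiality and monotonicity of the blow-up limit, which is exactly where the failure of \eqref{nondeg2} (and the availability of Lemma~\ref{prop1}) enters.
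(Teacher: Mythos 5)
Your part (ii) and the first three regimes of part (i) match the paper's proof in substance. For (ii), you invoke BCN Lemma 3.2 to get $u\ge a+\epsilon$ on $D_\eta$ and then apply Lemma~\ref{lem1}, exactly as the paper does. For $n=2$ you use the planar Liouville theorem for bounded superharmonic functions, again as in the paper. For $n\ge3$ with \eqref{nondeg2}, the paper passes via Proposition~\ref{p1b} and then Lemma~\ref{lem1} (applied to $u$ on $\R^n$ with values in $[a+\epsilon,b]$) to conclude $u\equiv b$; your mean-value argument (superharmonicity plus $u\to b$ at infinity gives $u\ge b$ pointwise) is an equivalent and equally valid shortcut. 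The $\liminf_{|x|\to\infty}u(x)=a$ claim is also handled correctly, by the same contradiction through Lemma~\ref{lem1} as in the paper.

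The genuine gap is exactly the step you flag as ``subtle'': establishing the \emph{uniform} bound $u(\R^n)\subset(a,C_W]$ with $C_W$ depending only on $W$. Your proposed blow-up argument is misdirected on two counts. First, performing the blow-up at $y_k$ with $u(y_k)\to\sup u$ for a \emph{fixed} solution $u$ can at most show $\sup u<b$ for that particular $u$ (which the SMP already gives you); it cannot yield a constant independent of the solution. Second, the ingredients you propose --- moving planes to force radiality of the blow-up limit, followed by Pohozaev and Lemma~\ref{prop1} as an ``ODE classification'' --- are not available here: the blow-up limit $v_\infty$ has a maximum at $0$ but no a priori decay at infinity, which is what moving planes would need; and Lemma~\ref{prop1} constructs a single example, not a classification of radial profiles. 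The paper's actual argument is different and self-contained: it defines $C_W=\sup\{u(x):x\in\R^n,\ u\in\mathcal F\}$ over the \emph{family} $\mathcal F$ of all solutions valued in $(a,b)$, and argues by contradiction. If $u_k\in\mathcal F$ and $u_k(x_k)\to b$, the translates converge to $v_\infty\equiv b$; one then introduces the radial minimizer $\phi_R$ of the energy \eqref{subp} on $B_R(0)$ (a solution with $a<\phi_R\le b-\delta_R$ and $\delta_R\to0$), observes that $\phi_R\le v_k$ on $B_R(0)$ for $k$ large, and applies the BCN sliding lemma \cite[Lemma 3.1]{BCN} to propagate the lower bound $u_k\ge\phi_R(0)=a+\epsilon_R$ to all of $\R^n$. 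Lemma~\ref{lem1} then forces $u_k\equiv b$, a contradiction. It is this sliding-with-a-ball-minimizer mechanism, applied to a sequence of solutions rather than to one, that produces the uniform constant $C_W$; your sketch does not reach it.
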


Next, we derive some Liouville type results by considering domains $D\subset \R^n$ satisfying the following condition:
\begin{equation}\label{exrad}
\text{the radii of the balls contained in $\R^n\setminus D$ are uniformly bounded by a constant $\Lambda>0$.}
\end{equation}

\begin{theorem}\label{th2}
Let $W\in C^{1,1}_{loc}(\R)$ be a non negative potential satisfying \eqref{incl2w}, and $W(b)=0$. Assume the domain $D$ satisfies  \eqref{exrad}, and $u\in C^2(\R^n)$ is a bounded entire solution of \eqref{scalar} such that $\sup_{\R^n}u=b$, and $u(D)\subset (a,b]$. Then, $u \equiv b$.
\end{theorem}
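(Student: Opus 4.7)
The plan is to reduce the theorem to showing that the supremum $b$ is attained by $u$ at some interior point; a local strong maximum principle then forces $u\equiv b$. Indeed, if $u(x_0)=b$ for some $x_0\in\R^n$, then by continuity $u>a$ on a small ball $B_r(x_0)$, so $u\in(a,b]$ there and $\Delta u=W'(u)\le 0$ makes $u$ superharmonic with interior maximum $b$; the strong maximum principle gives $u\equiv b$ on $B_r(x_0)$, whence $\{u=b\}$ is both open and closed in the connected space $\R^n$ and $u\equiv b$.

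To secure attainment, I would exploit Modica's gradient estimate in combination with \eqref{exrad}. Since $W\ge 0$ and $u$ is a bounded entire solution of \eqref{scalar}, the classical Modica inequality $|\nabla u|^2\le 2W(u)$ holds on $\R^n$. Combined with $W(b)=W'(b)=0$ and $W\in C^{1,1}_{\loc}$, a Taylor estimate gives $W(s)\le\tfrac{1}{2}\|W''\|_{L^\infty}(b-s)^2$ near $b$, hence $|\nabla u|\le C_W(b-u)$ wherever $u$ is close to $b$; Gronwall's lemma along radial segments then yields the quantitative propagation
\[
u(x_0)\ge b-\eta\ \Longrightarrow\ u\ge b-\eta\,e^{C_W\rho}\ \text{on}\ B_\rho(x_0),
\]
valid as long as $\eta\,e^{C_W\rho}$ stays small compared to $b-a$. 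Picking $y_k$ with $u(y_k)\to b$, the density consequence of \eqref{exrad} (every point of $\R^n$ is within distance $\Lambda$ of $D$) supplies $x_k\in D$ with $|x_k-y_k|\le\Lambda+1$; the propagation then forces $u(x_k)\to b$, so one may assume the maximizing sequence lies in $D$. Standard interior elliptic $C^{2,\alpha}$ estimates make the translates $u_k:=u(\,\cdot\,+x_k)$ precompact in $C^2_{\loc}(\R^n)$, and any subsequential limit $u_\infty$ solves \eqref{scalar} with $u_\infty\le b$ and $u_\infty(0)=b$; applying the first paragraph to $u_\infty$ gives $u_\infty\equiv b$ on $\R^n$.

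The main obstacle is the final passage from $u_\infty\equiv b$ back to $u\equiv b$: since the $x_k$'s drift to infinity, a naive application of the strong maximum principle does not pull the attainment back to any fixed point. I would set $v:=b-u\ge 0$ and observe that on $\Omega:=\{u>a\}\supset D$, which is connected by \eqref{inrad} and whose complement is $\Lambda$-thin by \eqref{exrad}, $v$ satisfies the linear elliptic equation $-\Delta v+c(x)v=0$ with $0\le c(x)\le\|W''\|_{L^\infty([a,b])}$ (extending $c$ by zero where $v=0$). A Harnack chain along $\Omega$, in the spirit of the maximum principle in unbounded domains of \cite{BCN}, combined with the smallness of $v$ on the balls $B_R(x_k)\subset\Omega$ arising from the convergence $u_k\to u_\infty\equiv b$, should force $v\equiv 0$ on $\Omega$; continuity of $u$ together with the denseness of $D$ in $\R^n$ then extends this to $v\equiv 0$ on $\R^n$. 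Making this Harnack chain rigorous---so that the vanishing of $v$ at infinity genuinely propagates to every fixed point---is the delicate technical step, and it is precisely there that the fatness condition \eqref{exrad} is essentially used.
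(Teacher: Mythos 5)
Your reduction in the first paragraph is correct: if $b$ is attained at some interior point $x_0$, then locally $u\in(a,b]$, so $\Delta u=W'(u)\le 0$ and the strong maximum principle propagates $u\equiv b$. The Modica gradient bound plus Gronwall to show that a maximizing sequence $\{y_k\}$ can be pushed into $D$ via \eqref{exrad}, and that the blow-downs $u_k=u(\cdot+x_k)$ converge to the constant $b$, are also sound. The genuine gap is exactly where you suspect: the Harnack chain from $B_R(x_k)$ back to a fixed point does not close. For a second-order equation with bounded zero-order coefficient, the Harnack constant accumulates multiplicatively along a chain, giving a bound of the form $v(0)\le C^{N_k}\sup_{B_R(x_k)}v$ with $N_k\sim|x_k|$; since $|x_k|\to\infty$, the smallness of $v$ near $x_k$ is completely destroyed. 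This is not a technicality: the paper explicitly records (Theorem \ref{th-limb2} and \cite{BL}) that there exist entire solutions with $u<b$ everywhere, $u\to b$ at infinity, and $u\in(a,b)$ outside a ball; any such solution shows that ``$\sup u=b$ and $u$ close to $b$ far away'' cannot force attainment without using $u(D)\subset(a,b]$ and \eqref{exrad} in a structurally different way. Two smaller issues: \eqref{exrad} does not imply that $D$ is dense in $\R^n$ (take $D=\R^n\setminus\overline{B_\Lambda}$), so the final ``denseness'' step is also unjustified; and the zero-order coefficient $c(x)=-W'(u)/(b-u)$ is nonnegative on $\Omega$ not because of a bound on $W''$ but directly because $W'\le0$ on $(a,b]$.

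The paper's proof goes a different route and avoids the chaining problem entirely. It works with $\iota:=\inf_{\R^n}u$ rather than trying to attain the supremum. First, assuming $W(\iota)>0$, it builds radial energy minimizers $\phi_R$ on $B_R$ for a truncated potential, with $\phi_R=\iota$ on $\partial B_R$, $\iota\le\phi_R\le\sigma_R<\sigma=\min\{t\ge\iota:W(t)=0\}$, and $\sigma_R\to\sigma$. Since $u(\cdot+x_k)\to b$ uniformly on $B_R$, one eventually has $\phi_R\le u(\cdot+x_k)$ on $B_R$, and then the sliding method of \cite[Lemma 3.1]{BCN} (slide the center of the comparison ball anywhere) gives $u\ge\sigma_R>\iota$ on all of $\R^n$, contradicting $\inf u=\iota$. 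Hence $W(\iota)=0$, so $\iota=b$ or $\iota<a$. Finally, \eqref{exrad} rules out $\iota<a$: a blow-down along a minimizing sequence $\{z_k\}$ gives $u\approx\iota$ on $B_{\Lambda+1}(z_k)$, but by \eqref{exrad} that ball meets $D$, where $u>a$, a contradiction if $\iota<a$. This is where the fatness condition actually enters—controlling the infimum, not propagating smallness of $b-u$. You will not be able to patch the Harnack step; the sliding argument or an equivalent global comparison mechanism is what is needed.
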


\begin{remark}\label{modic} Modica \cite{modica} proved that if $W\in C^2(\R)$ is a non negative potential, and $u$ is a bounded solution of \eqref{scalar} in $\R^n$, then the condition $W(u(x_0))=0$ for some $x_0\in \R^n$ implies that $u$ is constant. In the sequel, a new proof of this result which also applies to potentials $W\in C^{1,1}_{loc}(\R)$ was proposed in \cite{caf}. Therefore, the hypothesis $\sup_{\R^n} u = b$ in Theorem \ref{th2} is not very strong, since the condition $u(D) \subset (a, b]$ yields that  either $u < b$ in $\R^n$ or $u \equiv b$, so that $\sup_{\R^n} u \leq b$. 
\end{remark}

Since the linear behaviour of $W'$ near the local maximum (see condition (\ref{nondeg})) implies that $\lim_{d(x,\partial D)\to\infty}u(x)=b$, when $D$ satisfies \eqref{inrad} (cf. Theorem \ref{th1} (ii)), we obtain a first corollary of Theorem \ref{th2}:
\begin{corollary}\label{cor1}
Let $W\in C^{1,1}_{loc}(\R)$ be a non negative potential satisfying \eqref{incl2w}, \eqref{nondeg}  and $W(b)=0$. Assume the domain $D$ satisfies \eqref{inrad} as well as \eqref{exrad}, and $u\in C^2(\R^n)$ is a bounded entire solution of \eqref{scalar} such that $u(D)\subset (a,b]$. Then, $u \equiv b$.
\end{corollary}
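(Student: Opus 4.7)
The plan is to realize Corollary \ref{cor1} as a direct application of Theorem \ref{th2}: the only hypothesis of that theorem which is not immediately built into the corollary's assumptions is $\sup_{\R^n}u=b$, so my job is to extract it from the extra conditions \eqref{inrad} and \eqref{nondeg}, together with the Modica--Caffarelli rigidity recalled in Remark \ref{modic}.

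First, I would apply Theorem \ref{th1}(ii). Since $D$ satisfies \eqref{inrad} and $W$ satisfies \eqref{nondeg}, that statement yields
\[
\lim_{d(x,\partial D)\to\infty}u(x)=b.
\]
Condition \eqref{inrad} further guarantees balls of arbitrarily large radius inside $D$, at whose centers $d(\cdot,\partial D)$ is as large as desired. Evaluating $u$ along such a sequence of centers shows $\sup_D u=b$, and in particular $\sup_{\R^n}u\ge b$.

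The more delicate step is the reverse inequality $\sup_{\R^n}u\le b$, because a priori nothing controls $u$ on $\R^n\setminus D$, which is precisely where values above $b$ could appear. To rule this out, I would argue as follows. If $u(x_0)=b$ for some $x_0\in\R^n$, then $W(u(x_0))=W(b)=0$; since $W\in C^{1,1}_{\loc}$ is nonnegative and $u$ is a bounded entire solution of \eqref{scalar}, the Modica--Caffarelli theorem quoted in Remark \ref{modic} forces $u\equiv b$, which is already the desired conclusion. Otherwise $u\ne b$ throughout $\R^n$, so the open sets $\{u<b\}$ and $\{u>b\}$ form a partition of the connected space $\R^n$. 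Since $u(D)\subset(a,b]$ combined with $u\ne b$ on $D$ gives $u<b$ on the nonempty set $D$, connectedness forces $\{u>b\}=\emptyset$, and hence $u<b$ on all of $\R^n$. In particular $\sup_{\R^n}u\le b$.

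Combining the two bounds yields $\sup_{\R^n}u=b$, so every hypothesis of Theorem \ref{th2} is met and that theorem concludes $u\equiv b$. The main obstacle, namely propagating the pointwise bound $u\le b$ from $D$ to $\R^n\setminus D$ with no explicit control on the geometry of the complement, is dispatched for free by Modica--Caffarelli combined with the connectedness of $\R^n$.
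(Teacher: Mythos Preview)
Your argument is correct and follows exactly the paper's route: apply Theorem \ref{th1}(ii) to get $\lim_{d(x,\partial D)\to\infty}u(x)=b$, invoke the Modica--Caffarelli rigidity (Remark \ref{modic}) together with connectedness of $\R^n$ to obtain $u\le b$, combine to get $\sup_{\R^n}u=b$, and finish with Theorem \ref{th2}. The only difference is that the paper compresses your connectedness/dichotomy step into the single sentence ``in view of Remark \ref{modic} we have $u\le b$'', since that remark already records the conclusion ``either $u<b$ in $\R^n$ or $u\equiv b$''.
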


Finally, we particularise Corollary \ref{cor1} in the case where $D$ is the complement of a ball.
\begin{corollary}\label{cor2}
Let $W\in C^{1,1}_{loc}(\R)$ be a nonnegative potential satisfying \eqref{incl2w}, and $W(b)=0$. Assume $u\in C^2(\R^n)$ is an entire solution of \eqref{scalar} such that 
\begin{equation}
\label{a<uleb}
u(x)\in (a,b]\qquad\forall\,x\in\R^n\backslash B_R,
\end{equation}
for some $R>0$. Then, $u \equiv b$, provided that $n=2$, or $n\geq 3$ and \eqref{nondeg2} holds.
\end{corollary}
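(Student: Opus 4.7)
The plan is to reduce the corollary to a direct application of Theorem~\ref{th2} on the exterior domain $D:=\R^n\setminus B_R$. One immediately checks that $D$ satisfies \eqref{exrad} (its complement is the single ball $B_R$), that $u$ is bounded on $\R^n$ (from \eqref{a<uleb} combined with continuity of $u$ on the compact set $\overline{B_R}$), and that the other structural hypotheses $W\ge 0$, \eqref{incl2w}, $W(b)=0$, and $u(D)\subset(a,b]$ are present by assumption. The only non-trivial verification is therefore the sup-attainment condition $\sup_{\R^n}u=b$, which I would establish by matching upper and lower bounds.

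For the upper bound $\sup_{\R^n}u\le b$, I would proceed exactly as in Remark~\ref{modic}: if $u(x_0)=b$ at some point $x_0\in\R^n$, then $W(u(x_0))=W(b)=0$ and Modica's theorem forces $u\equiv b$, which already gives the conclusion of the corollary. Otherwise $u\ne b$ on $\R^n$, so the disjoint open sets $\{u<b\}$ and $\{u>b\}$ cover $\R^n$; since the first contains $D\ne\emptyset$, connectedness of $\R^n$ forces the second to be empty, i.e.\ $u<b$ on all of $\R^n$. For the lower bound $\sup_{\R^n}u\ge b$, I would invoke Proposition~\ref{p1b}, which under the dimensional hypotheses ``$n=2$, or $n\ge 3$ together with \eqref{nondeg2}'' guarantees $\lim_{|x|\to\infty}u(x)=b$ whenever $u$ solves \eqref{scalar} on $D=\R^n\setminus B_R$ with $u(D)\subset(a,b]$. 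Combining the two estimates yields $\sup_{\R^n}u=b$, and Theorem~\ref{th2} then delivers $u\equiv b$.

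The genuine obstacle is not internal to this corollary but is already encapsulated in Proposition~\ref{p1b}: the fact that the exponent $n/(n-2)$ in \eqref{nondeg2} is the sharp degeneracy threshold for the asymptotic $u\to b$ at infinity in dimensions $n\ge 3$ (as witnessed by the counterexample of Lemma~\ref{prop1}), while in $n=2$ no such nondegeneracy is required. Within the proof of Corollary~\ref{cor2} itself, the only mildly delicate local point is ensuring that $u$ does not exceed $b$ inside $B_R$, and this is settled by the Modica-plus-connectedness argument above; everything else is a routine packaging of Theorem~\ref{th2} and Proposition~\ref{p1b}.
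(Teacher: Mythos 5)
Your reduction to Theorem~\ref{th2} is exactly the paper's strategy, and the verification that $D=\R^n\setminus B_R$ satisfies \eqref{exrad}, the upper bound $\sup_{\R^n}u\le b$ via Modica's theorem plus the connectedness of $\R^n$ (this is precisely Remark~\ref{modic}), and the invocation of Proposition~\ref{p1b} for $n\ge3$ all match the paper's proof. However, there is a genuine gap in your treatment of the two-dimensional case: you write that Proposition~\ref{p1b} ``under the dimensional hypotheses $n=2$, or $n\ge 3$ together with \eqref{nondeg2}'' gives $\lim_{|x|\to\infty}u(x)=b$, but Proposition~\ref{p1b} is stated \emph{only} for $n\ge 3$. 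Indeed, its proof and the condition \eqref{nondeg2} it relies on both involve the exponent $\tfrac{n}{n-2}$, which is meaningless when $n=2$, and the supersolution argument there produces a radial barrier with inverse-power decay $r^{2-n}$ that degenerates in two dimensions. So as written, your proof does not cover $n=2$.

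The paper handles $n=2$ by a different mechanism. Since $W'\le 0$ on $[a,b]$, the function $u$ is superharmonic on $\{|x|>R\}\subset\R^2$, and bounded below; Lemma~\ref{subharmonic} (the two-dimensional minimum principle for superharmonic functions on an exterior domain, obtained by adding the logarithmic barrier $\epsilon\ln(|x|/R)$) then shows that $u$ attains its infimum on $\partial B_{R+1}$. Setting $\gamma:=\min_{|x|=R+1}u>a$, this gives $u(x)\in[\gamma,b]$ for $|x|>R+1$, i.e. $u$ is bounded away from $a$ at infinity without any nondegeneracy assumption on $W'$ near $a$. Lemma~\ref{lem1} then yields $\lim_{|x|\to\infty}u(x)=b$, after which the argument proceeds as in your $n\ge3$ case. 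Replacing your appeal to Proposition~\ref{p1b} with this Lemma~\ref{subharmonic}--plus--Lemma~\ref{lem1} chain in dimension two would close the gap and make your proof coincide with the paper's.
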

We will prove these results in Section \ref{sec-Liouville}.
\begin{remark}
Corollary \ref{cor2} was established in \cite{rizzi} for entire solutions to the Cahn-Hilliard equation \eqref{cahnhil}. Here we extend this result to general nonlinearities under optimal assumptions. Indeed, the necessity of condition \eqref{nondeg2} (when $n\geq 3$) for Corollary \ref{cor2} to hold, is clear in view of Lemma \ref{prop1}. 
\end{remark}
Other Liouville type results for stable solutions to semilinear PDEs were established in \cite{DF}. Here there is no stability assumption.\\

After that, we will address the issue of radial symmetry. In \cite{GNN}, the authors prove radial symmetry of solutions to fully nonlinear equations of very general form, provided these solutions have a suitable asymptotic polynomial decay at infinity (see Theorem $4$ there). Here we are interested in radial symmetry of solutions to (\ref{scalar}) with $W$ satisfying (\ref{incl2w}), assuming that either $\lim_{|x|\to\infty} u(x)=b$ or $\lim_{|x|\to\infty} u(x)=a$.\\

The case in which $\lim_{|x|\to\infty} u(x)=b$ is easier. The following result is a consequence of \cite[Proposition 1]{GNN}: 
\begin{proposition}
\label{th-limb}
Let $u:\R^n\to\R$ be a solution to $\Delta u=W'(u)$, $n\ge 3$, where $W\in C^{2}(\R^n)$ is a potential fulfilling (\ref{incl2w}) and such that 
\begin{equation}
\label{cond-reg-W}
[b-\delta,b]\ni t\mapsto \frac{W'(t)}{|t-b|^p}\text{ is H\"{o}lder continuous for some } \delta>0, \, p\ge \frac{n+2}{n-2}.
\end{equation}
Assume that $u<b$ in $\R^n$ and $\lim_{|x|\to\infty}u(x)=b$. Then $u$ is radially symmetric. 
\end{proposition}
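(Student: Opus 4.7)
The plan is to reduce to a direct application of \cite[Proposition 1]{GNN} via the reflection $v := b - u$. The hypotheses $u<b$ in $\R^n$ and $\lim_{|x|\to\infty} u(x)=b$ translate into $v>0$ in $\R^n$ and $\lim_{|x|\to\infty} v(x)=0$. Differentiating the equation gives
\[
\Delta v + g(v) = 0 \quad \text{in } \R^n, \qquad g(t) := W'(b-t),
\]
with $g(0)=W'(b)=0$ by (\ref{incl2w}), and, by (\ref{cond-reg-W}), the ratio $g(t)/t^p = W'(b-t)/t^p$ H\"older continuous on $[0,\delta]$, for some $p \ge (n+2)/(n-2)$. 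Thus $g$ has exactly the (super)critical polynomial vanishing at $0$ treated by GNN.

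Next I would verify the quantitative decay estimate at infinity
\[
v(x) = O\bigl(|x|^{-2/(p-1)}\bigr) \quad \text{as } |x|\to\infty,
\]
which is the standing hypothesis of the moving-planes / Kelvin-transform symmetry theorem on the whole space. This is obtained by a standard barrier comparison on the exterior of a large ball: the asymptotic $g(v) \sim c\, v^p$ near $0$ (supplied by (\ref{cond-reg-W})) together with the explicit radial supersolution $C|x|^{-2/(p-1)}$ of the model equation $\Delta w + c\, w^p = 0$ gives the desired upper bound on $v$. The constraint $p \ge (n+2)/(n-2)$ is precisely what ensures that this algebraic rate is compatible with the Kelvin transform used in \cite[Proposition 1]{GNN}.

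With $v$ a positive solution of $\Delta v + g(v) = 0$ on $\R^n$, tending to zero and satisfying the sharp polynomial decay, \cite[Proposition 1]{GNN} applies and yields that $v$ is radially symmetric about some point $x_0 \in \R^n$. Translating back, $u = b - v$ is radially symmetric about the same point $x_0$.

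The main obstacle I expect is the quantitative decay step: the qualitative convergence $v \to 0$ alone does not suffice for the moving-planes argument on $\R^n$, and one must upgrade it to the sharp rate $O(|x|^{-2/(p-1)})$ via a barrier on $\R^n \setminus B_R$ exploiting the homogeneity of the nonlinearity at zero. Once this estimate is in place, the conclusion follows by direct citation.
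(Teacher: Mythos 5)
The reduction $v := b-u$ and the citation of GNN is the right framework, and this matches the paper's strategy. However, your decay estimate is a genuine gap, both in the sign analysis and in the resulting rate.

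Note first that for $t\in(0,\delta)$ one has $g(t)=W'(b-t)<0$, since $b-t\in(a,b)$ and $W'<0$ there by \eqref{incl2w}. Hence $\Delta v = -g(v)\ge 0$ outside a large ball, i.e.\ $v$ is \emph{subharmonic}, not superharmonic. Your model $\Delta w + c\,w^p = 0$ with $c>0$ (i.e.\ $\Delta w = -c\,w^p\le 0$) has the opposite sign, and the barrier $C|x|^{-2/(p-1)}$ you build from it, while formally a supersolution of $\Delta w + g(w)\le 0$ (trivially, because both $\Delta w<0$ and $g(w)<0$), yields only the rate $v(x)=O\bigl(|x|^{-2/(p-1)}\bigr)$. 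Since $p\ge\frac{n+2}{n-2}$ forces $\frac{2}{p-1}\le\frac{n-2}{2}<n-2$, this is strictly \emph{weaker} than the sharp rate $|x|^{2-n}$, and it is not enough for the Kelvin transform used in \cite[Proposition 1, Theorem 4]{GNN}: with $v=O(|x|^{-m})$ and $m<n-2$, the Kelvin transform $\tilde v(y)=|y|^{2-n}v(y/|y|^2)=O(|y|^{m-(n-2)})$ blows up at the origin, and the moving-planes argument in GNN does not get started.

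The correct (and much simpler) step is the one the paper takes: since $v$ is a bounded, positive, subharmonic function outside a ball tending to $0$ at infinity, comparison with a \emph{harmonic} barrier (this is precisely \cite[Lemma 22]{FMR}) yields the optimal estimate $v(x)\le C|x|^{2-n}$. The nonlinearity plays no role in the decay estimate; it only enters through hypothesis \eqref{cond-reg-W} when verifying the assumptions of \cite[Proposition 1, Theorem 4]{GNN}. With that decay in hand, the conclusion follows by direct citation, as you anticipated.
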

On the other hand, if $W$ is convex in an interval $(b-\delta,b)$, then the symmetry result follows from  \cite[Theorem 2]{FMR} in any dimension $n\geq 2$. Assumption (\ref{cond-reg-W}) is not required anymore. In view of Proposition \ref{th-limb} and \cite[Theorem 2]{FMR}, we obtain the following generalisation of Theorem \ref{start} (ii): 
\begin{theorem}\label{th-limb2}
Let $W\in C^{2}(\R)$ be a potential such that $W'(t)<0$ for any $t\in(a,b)$, $W'(a)=0$, and $W(t)\geq W(b)$ for any $t>b$. In addition, we suppose that one of the following is true:
\begin{itemize}
\item[(i)] $n\geq 3$, and $W\in C^{6,\alpha}(b-\delta,b+\delta)$, for some $\delta>0$ and $\alpha\in (0,1)$.
\item[(ii)] $n\geq 2$, and $W$ is convex in $(b-\delta,b)$, for some $\delta>0$.
\end{itemize}
 Assume also that $u:\R^n\to\R$ is a solution to $\Delta u=W'(u)$ such that $u(\R^n\backslash B_R)\subset(a,b)$ and $\lim_{|x|\to\infty}u(x)=b$. Then $u<b$ in $\R^n$ and it is radially symmetric. 
\end{theorem}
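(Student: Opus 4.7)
The plan is to first upgrade the given asymptotic hypothesis to the strict global bound $u < b$ on $\R^n$, and then to apply the cited symmetry results. As a preliminary step, I would draw a few immediate consequences of the hypotheses: since $W \in C^2$, $W' < 0$ on $(a,b)$, and $W(t) \ge W(b)$ for $t > b$, continuity of $W'$ at $b$ forces $W'(b) = 0$, so $b$ is a local minimum of $W$ and condition \eqref{incl2w} holds. I would then introduce the shifted potential $\tilde W := W - W(b)$, which is nonnegative on $[a, \infty)$ and strictly positive on $(a,b)$.

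Next I would show that $u < b$ on all of $\R^n$. The set $\{u \ge b\}$ lies in $\overline{B_R}$ and is therefore compact; if $M := \sup_{\R^n} u \ge b$ then $M$ is attained at some $x_0 \in B_R$ with $\nabla u(x_0) = 0$ and $\Delta u(x_0) \le 0$. The equality case $M = b$ would be ruled out by the strong maximum principle applied to $v := b - u \ge 0$: writing $W'(u) - W'(b) = c(x)(u - b)$ with $c(x) := \int_0^1 W''(b + s(u - b))\, ds$ bounded, $v$ solves a linear elliptic equation, and an interior zero would propagate globally, contradicting $u < b$ outside $B_R$. For the case $M > b$, I plan to invoke Modica--Caffarelli rigidity (cf.\ Remark \ref{modic}): after a separate maximum-principle argument establishing the global lower bound $u \ge a$ on $\R^n$ (so that $\tilde W(u) \ge 0$ everywhere), Modica's inequality $\frac{1}{2}|\nabla u|^2 \le \tilde W(u)$ together with its rigidity statement, combined with the asymptotic $\lim u = b$, will contradict $M > b$.

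With $u < b$ in hand, radial symmetry follows by direct application of the relevant prior result. In case (ii), the convexity of $W$ on $(b - \delta, b)$ is precisely the hypothesis of \cite[Theorem 2]{FMR}, which yields radial symmetry. In case (i), I would verify the H\"older condition \eqref{cond-reg-W} of Proposition \ref{th-limb}: since $W \in C^{6,\alpha}(b-\delta,b+\delta)$ and $W'(b) = 0$, a Taylor factorisation $W'(t) = (t-b)^k g(t)$ with $g$ H\"older continuous provides the required structure, and for $n \ge 3$ the $C^{6,\alpha}$ regularity reserve is enough to arrange the exponent $p \ge (n+2)/(n-2)$ demanded by \eqref{cond-reg-W}. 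Proposition \ref{th-limb} then completes the proof.

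The hard part will be the rigorous treatment of the subcase $M > b$ when proving $u < b$. The Modica--Caffarelli rigidity delivers a clean contradiction only when $\tilde W(M) = 0$; if $\tilde W(M) > 0$ then Modica's inequality is strict at $x_0$ and the rigidity theorem does not apply directly. To handle this residual case I expect to have to combine the sign information on $W'$ in a right neighbourhood of $b$ (supplied by convexity in case (ii) and by the smooth Taylor expansion in case (i)) with either a sliding/moving-plane reflection or a monotonicity-formula argument. The auxiliary global lower bound $u \ge a$, not directly given by the hypotheses, is a secondary technical point that I would address by a maximum-principle argument on $B_R$, or, failing that, by harmlessly modifying $W$ on $(-\infty, a)$ without affecting the equation on the actual range of $u$.
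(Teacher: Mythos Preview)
Your plan has two genuine gaps. First, in the step $u\le b$: you correctly flag the subcase $M:=\sup u>b$ with $\tilde W(M)>0$ as the hard one, but you offer no argument for it, and the Modica route cannot close it. Even in the favourable subcase $\tilde W(M)=0$, the rigidity in Remark~\ref{modic} requires $\tilde W\ge0$ on the full range of $u$; nothing in the hypotheses prevents $u$ from dipping below $a$ inside $B_R$, where $\tilde W$ may well be negative, and your proposed fixes are circular (no sign information on $W'$ below $a$ is assumed, and you cannot ``harmlessly'' modify $W$ on values that $u$ actually attains). The paper bypasses Modica entirely. Assuming $c:=\max u>b$ with $W(c)>W(b)$, it builds a radial minimiser $\phi_R$ of an auxiliary energy on $B_R$ with boundary value $c$, so that $b<\phi_R(0)<c$ for $R$ large, and then slides this barrier using \cite[Lemma~3.1]{BCN} to force $u\le\phi_{R_0}(0)<c$ everywhere, a contradiction. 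The remaining case $c>b$, $W(c)=W(b)$ is then elementary: since $W\ge W(b)$ for $t>b$, the point $c$ is a local minimum of $W$, hence $W'(c)=0$, and the strong maximum principle at the interior maximum gives $u\equiv c$, which contradicts $u\to b$.

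Second, your treatment of case~(i) does not work as stated. The $C^{6,\alpha}$ regularity does \emph{not} let you ``arrange'' the exponent $p\ge(n+2)/(n-2)$ in \eqref{cond-reg-W}: for $n=3$ this requires $p\ge5$, and $W'(t)/|t-b|^5$ is H\"older near $b$ only if $W''(b)=\cdots=W^{(5)}(b)=0$. If instead $W''(b)>0$, the quotient blows up and Proposition~\ref{th-limb} is unavailable---but then $W$ is convex on $(b-\delta,b)$, so \cite[Theorem~2]{FMR} applies. The paper runs precisely this case analysis on the first nonvanishing even derivative of $W$ at $b$ (the odd ones vanish automatically because $b$ is a local minimum and $W\in C^6$): it invokes \cite[Theorem~2]{FMR} when $W''(b)>0$ or when $W''(b)=0$ but $W^{(4)}(b)>0$, and falls back on Proposition~\ref{th-limb} with $p=5$ only when all derivatives through order five vanish. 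Your case~(ii) and your handling of the equality case $M=b$ are fine and match the paper.
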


\begin{remark}
\begin{enumerate}
\item If a potential $W$ satisfies the assumptions of Theorem \ref{th-limb2}, then it has a local minimum at $t=b$, so that $W'(b)=0$. However, this minimum is not required to be a global one. 
\item If $W'(t)>0$ for $t>b$, it follows from the maximum principle that any bounded solutions $u$ of \eqref{scalar} in $\R^n$, satisfies the bound $u\leq b$.
\item Let $u$ be a solution of \eqref{scalar} in $\R^n$. Then, if $n=2$ or $n\geq 3$ and \eqref{nondeg2} holds, the condition $u(\R^n\setminus B_R)\subset (a,b)$, implies that $\lim_{|x|\to\infty}u(x)=b$ in view of Lemmas \ref{subharmonic} and \ref{lem1} (resp. Proposition \ref{p1b}).
\item For the existence of radial solutions satisfying $\lim_{|x|\to\infty} u(x)=b$, we refer to \cite[Theorem 1, Theorem 4]{BL} and \cite[Theorem 1.3]{tan}.
\end{enumerate}
\end{remark}


We will prove these symmetry results in Section \ref{sec-limb}.\\

Assuming again that $W$ satisfies (\ref{incl2w}), the description of entire  solutions to \eqref{scalar} converging to $a$ at infinity is a much more difficult task. In that case, only a few symmetry results are available, under somewhat restrictive hypotheses on the solution and the nonlinearity. Some results can be found in \cite{CGS}, where a monotonicity assumption is required. As a particular case, their results apply to bounded solutions to the Lane-Emden equation $$-\Delta u=|u|^{p-1}u$$
in $\R^n$, for which several Liuoville type results are known (see for example \cite{BP,Farina-06,LMW}). For future purposes, the main difficulty is to remove the monotonicity and convexity assumption about the non-linearity.\\

By Proposition \ref{p1b}, we know that non trivial solutions can exist only if condition (\ref{nondeg2}) is violated. However, the fact that
\begin{equation}\label{asum1}
u(\R^n\backslash B_R)\subset(a,b)
\end{equation}
cannot guarantee a Liouville type result (cf. Lemma \ref{prop1}), or even radial symmetry under the assumption that
\begin{equation}\label{asum2}
\lim_{|x|\to\infty}u(x)=a.
\end{equation}
In section \ref{sec-lima}, we check that the solutions constructed in \cite{DMPP}, provide examples of nonradial solutions to \eqref{scalar}, such that $u(x)-a$ changes sign in a compact set, and \eqref{asum1} as well as \eqref{asum2} hold. It would be interesting to see if a nonradial solution satisfying $u(\R^n)\subset(a,b)$ and $\lim_{|x|\to\infty}u(x)=a$, may also exist for a potential $W$ having a negative derivative on the range of $u$. To the best of our knowledge, this is a difficult open problem.

\section{Asymptotic behaviour and Liouville type results}\label{sec-Liouville}

We first prove a basic lemma on the asymptotic behaviour of solutions satisfying \eqref{incl}.

\begin{lemma}\label{lem1}
Let $D\subset\R^n$ be a domain satisfying \eqref{inrad}, and let $u$ be a solution of \eqref{scalar} ($W\in C^{1,\alpha}_{loc}(\R)$, $\alpha\in(0,1)$). Assume also that $u(D)\subset[a,b]$, and $W'<0$ on the interval $[a,b)$ (with $a,b\in\R$). Then, $\lim_{d(x,\partial D)\to\infty}u(x)=b$, and $W'(b)=0$ hold. If in addition $D=\R^n$, then we have $u\equiv b$.
\end{lemma}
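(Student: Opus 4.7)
The plan is to reduce the entire lemma to a Liouville-type claim on $\R^n$ and then deduce the assertions on a general $D$ by a blow-up/compactness argument that exploits \eqref{inrad}.

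First I would establish the Liouville claim: any $C^{2}$ solution $v\colon\R^{n}\to[a,b]$ of $\Delta v=W'(v)$ is identically equal to $b$. Set $m:=\inf_{\R^{n}}v$ and assume, for contradiction, that $m<b$. If $m$ is attained at some $x_{0}\in\R^{n}$, then $x_{0}$ is an interior global minimum, so $\Delta v(x_{0})\geq 0$; but the equation combined with the hypothesis $W'<0$ on $[a,b)$ gives $\Delta v(x_{0})=W'(m)<0$, which is absurd. If $m$ is not attained, I choose $y_{k}\in\R^{n}$ with $v(y_{k})\to m$ and translate: $w_{k}(x):=v(x+y_{k})$. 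Since $v$ is uniformly bounded in $[a,b]$ and $W'\in C^{\alpha}_{\mathrm{loc}}(\R)$, interior Schauder estimates give uniform $C^{2,\alpha}$ bounds on every compact set, so a subsequence converges in $C^{2}_{\mathrm{loc}}$ to an entire solution $w_{\infty}\colon\R^{n}\to[a,b]$ of $\Delta w_{\infty}=W'(w_{\infty})$ satisfying $w_{\infty}(0)=m=\inf w_{\infty}$. The infimum is now attained, reducing matters to the previous case and yielding a contradiction. Hence $m=b$ and $v\equiv b$; in particular $W'(b)=0$.

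With the Liouville claim in hand, the case $D=\R^{n}$ is immediate. For a general $D$ satisfying \eqref{inrad} I would argue the uniform limit by contradiction: if there existed $\eta_{0}>0$ and a sequence $x_{k}\in D$ with $R_{k}:=d(x_{k},\partial D)\to\infty$ and $u(x_{k})\leq b-\eta_{0}$, then $v_{k}(y):=u(x_{k}+y)$ would be defined on $B_{R_{k}}(0)$ and admit, via the same Schauder compactness, a $C^{2}_{\mathrm{loc}}$-subsequential limit $v_{\infty}\colon\R^{n}\to[a,b]$ solving the equation, with $v_{\infty}(0)\leq b-\eta_{0}<b$, contradicting the Liouville claim. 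The existence of sequences $x_{k}\in D$ with $d(x_{k},\partial D)\to\infty$ is guaranteed by \eqref{inrad} (take the centres of balls of radius $k$ contained in $D$); along any such sequence $u(x_{k})\to b$ by what was just shown, and the $C^{2}_{\mathrm{loc}}$-convergence of the translates also gives $\Delta u(x_{k})=\Delta v_{k}(0)\to\Delta v_{\infty}(0)=0$, so passing to the limit in $\Delta u(x_{k})=W'(u(x_{k}))$ yields $W'(b)=0$.

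The main obstacle is that $\inf v$ need not be attained on $\R^{n}$, which is precisely what forces the secondary translation-and-limit step producing $w_{\infty}$. Everything else is routine: the elementary observation that $\Delta v\geq 0$ at an interior minimum, and the standard interior Schauder estimates (applicable since $W\in C^{1,\alpha}_{\mathrm{loc}}$ and the solutions are uniformly bounded in $[a,b]$) that justify $C^{2}_{\mathrm{loc}}$ compactness for translates.
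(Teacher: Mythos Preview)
Your proof is correct and uses the same ingredients as the paper's: translation, interior Schauder compactness, and the elementary observation that $\Delta v\geq 0$ at an interior minimum conflicts with $W'<0$ on $[a,b)$. The only difference is organizational. You first isolate a Liouville claim on $\R^n$ (handling the case where $\inf v$ is not attained by an auxiliary translation step), and then invoke it for general $D$ via a second translation. The paper instead works directly on $D$: it sets $l:=\liminf_{d(x,\partial D)\to\infty}u(x)$, picks $x_k$ with $u(x_k)\to l$, and observes that the translated limit $v_\infty$ then automatically satisfies $v_\infty(0)=l=\min_{\R^n}v_\infty$ (because every $v_\infty(y)$ is a subsequential limit of $u$ at points with $d\to\infty$, hence $\geq l$). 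This single translation already places the minimum at the origin, so the contradiction $0\leq\Delta v_\infty(0)=W'(l)<0$ follows in one step, and the case $D=\R^n$ is then dispatched by a direct minimum argument. Your route is slightly less economical (two translations instead of one) but equally valid, and your explicit Liouville formulation is arguably more modular.
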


\begin{proof}
We first recall that for fixed $R>0$, the solution $u$ is uniformly bounded in $C^{2,\alpha}$ (for some $\alpha\in (0,1)$) on the balls $B_R(x)$ satisfying $d(x,\partial D)>R+1$ with $x\in D$. Let $l:=\liminf_{d(x,\partial D)\to\infty} u(x)$, and let  $\{x_k\}\subset D$ be a sequence such that $\lim_{k\to\infty} d (x_k,\partial D)=\infty$, and $\lim_{k\to\infty} u(x_k)=l$. We set $v_k(y)=u(x_k+y)$. In view of the previous estimates, we can apply the Ascoli theorem via a diagonal argument to the sequence $\{v_k\}$, and deduce that up to subsequence, $v_k$ converges in $C^2_{\mathrm{loc}}(\R^n)$ to an entire solution $v_\infty$ of (\ref{scalar}). Moreover, we have
$$v_\infty(0)=l=\min_{y\in \R^n}v_\infty(y),$$
and 
$$0\leq \Delta v_\infty (0)=W'(l) \leq 0,$$
so that, $l=b$, $W'(b)=0$, and $v_\infty\equiv b$. This proves that $\lim_{d(x,\partial D)\to\infty}u(x)=b$, and $W'(b)=0$ hold.\\ 

In the particular case where $D=\R^n$, we have $u\equiv b$, since otherwise $u$ would attain its minimum at a point $x_0$ where
$0\leq \Delta u(x_0)=W'(u(x_0)) <0$, which is a contradiction.
\end{proof}

Next, given a potential satisfying \eqref{incl2w}, we study the existence of solutions such that $u(\R^n)\subset (a,b)$, for $n \geq 3$. The answer to this question depends on the growth of $W'$ in a right neighbourhood of $a$. In Proposition \ref{p1b} below, we first examine the case of potentials for which \eqref{nondeg2} holds.

\begin{proposition}\label{p1b}
Let $n\geq 3$, let $B_\rho\subset\R^n$ be the open ball of radius $\rho$ centred at the origin, and let $W\in C^{1,1}_{loc}(\R)$ be a potential fulfilling \eqref{incl2w}, and \eqref{nondeg2}. Then, every solution 
$u \in C^2(\R^n\setminus B_\rho)$ to \eqref{scalar} such that $u(\R^n\setminus B_\rho)\subset (a,b)$, satisfies $\lim_{|x|\to\infty}u(x)=b$. 
\end{proposition}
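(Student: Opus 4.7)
The plan is to argue by contradiction, combining a blow-up/compactness argument in the spirit of Lemma~\ref{lem1} with a Liouville-type theorem for super-$p$-harmonic functions. Set $l:=\liminf_{|x|\to\infty}u(x)\in[a,b]$; since $u<b$ on $\R^n\setminus B_\rho$, it suffices to prove $l=b$. Assume for contradiction that $l<b$. Picking $x_k$ with $|x_k|\to\infty$ and $u(x_k)\to l$, and using elliptic $C^{2,\alpha}$ estimates together with Ascoli--Arzel\`a, the translates $u(\cdot+x_k)$ subconverge in $C^2_{\mathrm{loc}}(\R^n)$ to an entire solution $v_\infty$ of \eqref{scalar} with $v_\infty\ge l$ and $v_\infty(0)=l$. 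At this interior minimum $\Delta v_\infty(0)=W'(l)\ge 0$, so by \eqref{incl2w} one must have $l\in\{a,b\}$, which forces $l=a$; superharmonicity of $v_\infty$ and the strong minimum principle then give $v_\infty\equiv a$.

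Next I would convert this information into a quantitative form suitable for rescaling. By a diagonal extraction, there exists $R_k\to\infty$ with $\varepsilon_k:=\sup_{B_{R_k}(x_k)}(u-a)\to 0$, so for $k$ large $u-a$ lies below the threshold $s_0-a$ on $B_{R_k}(x_k)$, and \eqref{nondeg2} applies. Writing $w:=u-a>0$ and $p:=\tfrac{n}{n-2}$,
\[ -\Delta w\ge C_0\,w^{p}\qquad\text{on }B_{R_k}(x_k). \]

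The third step rescales via the doubling lemma of Pol\'a\v{c}ik--Quittner--Souplet applied to $w^{(p-1)/2}$ on $B_{R_k}(x_k)$, producing base points $\tilde x_k$ and heights $m_k:=w(\tilde x_k)>0$ such that, for every fixed $L>0$ and $k$ sufficiently large, the ball $B_{L/\lambda_k}(\tilde x_k)$ (with $\lambda_k:=m_k^{(p-1)/2}$) is contained in $B_{R_k}(x_k)$ and $w\le 2m_k$ on it. The rescaled functions
\[ \tilde w_k(y):=\frac{1}{m_k}\,w\bigl(\tilde x_k+\lambda_k^{-1}y\bigr) \]
then satisfy $\tilde w_k(0)=1$, $0<\tilde w_k\le 2$ on $B_L(0)$, and the scale-invariant inequality $-\Delta\tilde w_k\ge C_0\,\tilde w_k^{p}$. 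By elliptic regularity, a subsequence converges in $C^2_{\mathrm{loc}}(\R^n)$ to a non-negative entire $C^2$ function $\tilde w_\infty$ with $\tilde w_\infty(0)=1$ and $-\Delta\tilde w_\infty\ge C_0\,\tilde w_\infty^{p}$ on all of $\R^n$.

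To finish, I would invoke the classical Liouville theorem for non-negative super-$p$-harmonic functions on $\R^n$ (Gidas--Spruck in the equation case, Mitidieri--Pohozaev for the inequality), which in the range $1<p\le n/(n-2)$ — precisely the Serrin exponent featuring in \eqref{nondeg2} — forces $\tilde w_\infty\equiv 0$, contradicting $\tilde w_\infty(0)=1$. Thus $l=b$, and combined with $u<b$ this yields $\lim_{|x|\to\infty}u(x)=b$. The main technical obstacle is the doubling step: a priori $\varepsilon_k$ may decay much faster than $R_k$ grows, so a naive rescaling at $x_k$ itself could produce a limit living only on a bounded ball, where the Liouville theorem is useless; the doubling lemma is designed precisely to reposition the base point and select a scale at which $m_k\cdot(\mathrm{dist}(\tilde x_k,\partial B_{R_k}(x_k)))^{(p-1)/2}$ is as large as needed, producing a genuine entire limit to which the Liouville theorem applies.
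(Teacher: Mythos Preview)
Your opening steps are fine; the gap is in the doubling/rescaling step, and it reflects the fact that $p=n/(n-2)$ is exactly the critical Serrin exponent. The Pol\'a\v{c}ik--Quittner--Souplet lemma requires a seed point $y$ with $M(y)\,\dist(y,\partial B_{R_k}(x_k))>2L$; it only repositions base points, it does not manufacture such points. On $B_{R_k}(x_k)$ you have $M=w^{1/(n-2)}\le\varepsilon_k^{1/(n-2)}$ and $\dist\le R_k$, while necessarily $R_k<|x_k|-\rho$ (the ball must avoid $B_\rho$), so the product is at most $\varepsilon_k^{1/(n-2)}R_k$ with no reason to diverge. The superharmonic lower bound $w(x)\ge c|x|^{2-n}$ of Lemma~\ref{prop-lower-bound-superharm} gives only $M(x)\,|x|\ge c^{1/(n-2)}$, a constant. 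In the borderline scenario $w(x)\sim c|x|^{2-n}$ --- which your argument has not yet ruled out --- every rescaling at scale $\lambda_k\sim|x_k|^{-1}$ produces a limit living on a fixed ball or on $\R^n\setminus\{\text{point}\}$, never an entire one, so the Liouville theorem on $\R^n$ cannot be invoked. There is a second difficulty: the rescaled equation reads $-\Delta\tilde w_k=-m_k^{-p}W'(a+m_k\tilde w_k)$, and the Lipschitz bound $|W'(a+s)|\le Cs$ (all that $C^{1,1}_{\mathrm{loc}}$ provides) only yields $|\Delta\tilde w_k|\le Cm_k^{1-p}\tilde w_k\to\infty$, so the claimed $C^2_{\mathrm{loc}}$ compactness is not justified either.

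The paper's proof bypasses both issues by a quite different route. It assumes by contradiction the stronger hypothesis $u\le b-\eta$ on the whole exterior (so that $-\Delta w\ge c_1 w^{n/(n-2)}$ holds globally there), first treats the radial case by an explicit ODE argument that captures the logarithmic correction specific to the critical exponent --- from $v\ge cr^{2-n}$ one integrates to get $r^{n-1}v'(r)\lesssim-\ln r$, which is incompatible with the upper bound $v\le\tilde k\,r^{2-n}$ derived just before --- and then reduces the nonradial case to the radial one by constructing a radial supersolution $\phi^*(x)=\min_{|y|=|x|}u(y)$ via Kato's inequality and invoking the sub-/supersolution method. This yields $\sup u=b$; a separate sliding argument with radial minimizers then upgrades this to $\lim_{|x|\to\infty}u=b$. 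The $\ln r$ gain is precisely what a pure scaling argument cannot see at $p=n/(n-2)$.
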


\begin{proof}
Without loss of generality, we may assume that $a=0$. Assume by contradiction that 
\begin{equation}\label{hyp11}
u(\R^n\setminus B_\rho)\subset (0,b-\eta], \text{ for some $\eta>0$ small}.
\end{equation}
Then, we have
\begin{equation}\label{assump1}
W'(u)\leq -c_1 u^{\frac{n}{n-2}}, \ \forall u \in [0,b-\eta]
\end{equation}
for a constant $0<c_1<C_0$. We first examine the case where $u$ is radial, that is, $u(x)=v(|x|)$. As a consequence, $v$ solves
\begin{equation}\label{eqrad}
v''(r)+\frac{n-1}{r}v'(r)=W'(v(r)), \ \forall r\in [\rho,\infty).
\end{equation}
Our claim is that $v'(\rho_0)\leq 0$ holds for some $\rho_0\geq\rho$. Indeed, otherwise, we would have 
\begin{equation*}
\forall r \in[\rho,\infty): \ v'(r)> 0, \text{ and } v''(r)\leq  \kappa:=\max_{[v(\rho),b-\eta]}W'<0,
\end{equation*}
which is impossible.
So far, we have proved that $v'(\rho_0)\leq 0$ for some $\rho_0\geq\rho$. By noticing that $v'(\rho_0)=0\Rightarrow v''(\rho_0)<0$ in view of \eqref{eqrad}, one can see that $v'<0$ holds on an interval $(\rho_0,\rho_0+\epsilon)$, for small $\epsilon>0$. Let $l:=\sup\{r>\rho_0: v'<0 \text{ on } (\rho_0,r) \}$. It is clear that $l=\infty$, since otherwise we would deduce that $v'(l)=0$ and $v''(l)<0$, which is a contradiction. This establishes that $v'<0$ on $(\rho_0,\infty)$. Now, it follows from \eqref{eqrad} that
\begin{align*}
\forall r>\rho_1: \ r^{n-1}v'(r)&\leq r^{n-1}v'(r)-\rho_0^{n-1}v'(\rho_0) =\int_{\rho_0}^r s^{n-1}W'(v(s)) ds \\
&\leq -c_1 v^{\frac{n}{n-2}}(r)\int_{\rho_0}^r s^{n-1}ds \leq -k v^{\frac{n}{n-2}}(r) r^n,
\end{align*}
for a constant $k>0$, and for $\rho_1>\rho_0$ large enough. Next, an integration of the previous inequality gives
\begin{align*}
\forall r >\rho_1: v^{-\frac{2}{n-2}}(r)&\geq v^{-\frac{2}{n-2}}(r)- v^{-\frac{2}{n-2}}(\rho_1)\geq \frac{k}{n-2}(r^2- \rho_1^2),
\end{align*}
from which we deduce that 
\begin{equation}\label{contr1}
\text{$v(r)\leq \tilde k r^{2-n}$, for a constant $\tilde k>0$, and for $r>\rho_2>\rho_1$ large enough}.
\end{equation}
 On the other hand, the lower bound provided by Lemma \ref{prop-lower-bound-superharm}:
\begin{equation}\label{contr2}
\text{$\forall r \geq \rho$: $v(r)\ge cr^{2-n}$, for a constant $c>0$,}
\end{equation}
combined with \eqref{assump1} implies that
\begin{align*}
\forall r>\rho_0:  r^{n-1}v'(r)&\leq\int_{\rho_0}^r s^{n-1}W'(v(s)) ds\leq -c_1c^{\frac{n}{n-2}}\int_{\rho_0}^r s^{-1}ds = -c_1c^{\frac{n}{n-2}}\ln\big(\frac{r}{\rho_0}\big).
\end{align*}
As a consequence, we obtain the bound $v'(r)\leq -c_1c^{\frac{n}{n-2}}\ln(\frac{r}{\rho_0})r^{1-n}$, $\forall r>\rho_0$, which contradicts \eqref{contr1}. Therefore the existence of a radial solution satisfying \eqref{hyp11} is ruled out. 

To complete the proof of Proposition \ref{p1b}, we also have to exclude the existence of non radial solutions. Assume by contradiction that $u \in C^2(\R^n\setminus B_\rho)$ is a solution of \eqref{scalar} satisfying \eqref{hyp11}. In view of Lemma \ref{prop-lower-bound-superharm}, $u$ satisfies the lower bound
\begin{equation}\label{subsol}
u(x)> \phi_*(x)= c|x|^{2-n}, \ c>0,
\end{equation}
where $\phi_*$ is a subsolution of \eqref{scalar}, that is, $\Delta\phi_*=0\geq W'(\phi_*)$. Starting from $u$, we shall construct a radial supersolution $\phi^*$ of \eqref{scalar}, such that $\phi_*\le \phi^*$. Let $\rho_{i,m}$ be the rotation of angle $\frac{\pi}{2^m}$ around the $x_i$ coordinate axis of $\R^n$ ($m\geq 1$, $i=1,\ldots,n-1$), and let $$G_m:=\{\rho_{1,m}^{k_1}\circ\ldots\circ \rho_{n-1,m}^{k_{n-1}}: 0\leq k_i\leq 2^{m+1}-1 , \ i=1,\ldots,n-1\}.$$ Using spherical coordinates, one can see that given $|x_0|\geq\rho$, the set $\cup_{m\geq 1} G_m x_0$ is dense in the sphere $\{x\in\R^n: |x|=|x_0|\}$. In particular, we have
\begin{equation}\label{density}
\lim_{m\to\infty}\min_{g\in G_m} u(gx_0)=\min_{|x|=|x_0|} u(x).
\end{equation}
Next, we notice that for every $g\in G_m$, $ x\mapsto u(gx)$ solves \eqref{scalar}. On the other hand, in view of the Kato inequality, $\phi_m(x):=\min_{g\in G_m}u(gx)$ is a supersolution of \eqref{scalar}, satisfying $\phi_*\le\phi_m\leq u$. In addition,  it follows from \eqref{density} that $\phi^*(x):=\lim_{m\to\infty}\phi_m(x)=\min\{u(y): |y|=|x|\}$. Finally, since $|\nabla \phi_m|$ is uniformly bounded on $\R^n\setminus B_\rho$, we obtain that (up to subsequence) $\phi_m$ conververges weakly to $\phi^*$ in $W^{1,2}(B_R\setminus\overline{B_\rho})$, for every $R>\rho$. This implies that $\phi^*$ (which belongs to $W^{1,2}(B_R\setminus\overline{B_\rho})$, for every $R>\rho$) is a radial supersolution of \eqref{scalar} satisfying $\phi_*\le\phi^*\leq u$. To conclude, we deduce from the method of sub- and supersolutions (cf. Section \ref{sec:A1}, and for instance \cite[Lemma 1.1.1]{Dup}), the existence of a radial solution $v\in C^2(\R^n\setminus B_\rho)$, satisfying $0<\phi_*\leq v\leq \phi^*\leq b-\eta$. In view of the first part of the proof, this is a contradiction.

So far, we have established that every solution $u \in C^2(\R^n\setminus B_\rho)$ to \eqref{scalar} such that $u(\R^n\setminus B_\rho)\subset (0,b)$, satisfies $\sup_{\R^n\setminus B_\rho}u=b$. That is, 
\begin{equation}\label{second}
\exists\{x_k\}_{k\in\N}: \ \lim_{k\to\infty} |x_k|=\infty, \text{ and } \lim_{k\to\infty}u(x_k)= b.
\end{equation}

Setting $v_k(y):=u(x_k+y)$, and proceeding as in Lemma \ref{lem1}, we obtain that (up to subsequence) $v_k$ converges in $C^2_{\mathrm{loc}}(\R^n)$ to an entire solution $v_\infty$ of (\ref{scalar}). Furthermore, since $v_\infty(0)=b$, the maximum principle implies that $v_\infty\equiv b$. At this stage we consider 
a minimizer $\phi_R\in H^1(B_R(0))$ of the energy functional
\begin{subequations}\label{subp} 
\begin{equation}
\tilde E(v)=\int_{B_R(0)}\Big(\frac{1}{2}|\nabla v(x)|^2+\tilde W(v(x))\Big)d x,
\end{equation}
in $H^1_0(B_R(0))$, where
\begin{equation}
\tilde W(v)=\begin{cases}
W(a) &\text{ for } v\leq 0\\
W(v) &\text{ for } 0 \leq v\leq b\\
W(b) &\text{ for } v\geq b.
\end{cases}
\end{equation}
\end{subequations}
It is known that $\phi_R$ is a smooth radial solution of \eqref{scalar} in $B_R(0)$, such that $0\leq \phi_R\leq \max_{B_R(0)}\phi_R:= b-\delta_R$ on $B_R(0)$, for some $\delta_R>0$. In addition, we have $\lim_{R\to\infty} \delta_R=0$. Thus, given $\epsilon>0$, we can ensure that 
\begin{itemize}
\item $\delta_R<\epsilon$ for some $R>0$ large enough,
\item and $\phi_R\leq b-\delta_R\leq v_k$ holds on $B_R(0)$, for $k\geq k_R$ large enough.
\end{itemize}
Finally, by applying the sliding method of  Berestycki, Caffarelli, and Nirenberg \cite[Lemma 3.1]{BCN}, we deduce that $u(x)\geq \phi_R(0)\geq b-\epsilon$, provided that $|x|>\rho+R$. This completes the proof of Proposition \ref{p1b}
\end{proof}

In the subcritical case where $W'(u)\sim -\lambda |u-a|^p$ in a right neighbourhood of $a$,  with $\lambda>0$ and $p\in (\frac{n}{n-2},\frac{n+2}{n-2})$, we shall see in Lemmas \ref{prop1} and \ref{lempp} below, that depending on the potential, there may or may not exist a radial solution such that $u(\R^n)\subset (a,b)$.

\begin{lemma}\label{prop1}
Given any $n\ge 3$, $p>\frac{n}{n-2}$ and $\lambda>0$, there exists a potential $W\in C^2(\R)$ fulfilling \eqref{incl2w}, and a solution $u \in C^\infty(\R^n)$ to \eqref{scalar}, such that
\begin{itemize}
\item[a)] $\lim_{u\to a^+}\frac{W'(u)}{|u-a|^{p}}=-\lambda$,
\item[b)] $u$ is radial and radially decreasing (i.e. $u(x)=\tilde u(|x|)$, for a smooth decreasing function 
 $\tilde u:[0,\infty)\to(a,b)$),
\item[c)] $u(\R^n)\subset (a,b)$, and $\lim_{|x|\to\infty}u(x)=a$,
\item[d)] $W''(u(0))>0$.
\end{itemize}
\end{lemma}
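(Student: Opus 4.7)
The strategy is to prescribe a smooth radial profile $\tilde u(r)$ explicitly and then read off $W$ from the equation $W'(u)=\Delta u$ on the range of $u$. Set $\alpha := 1/(p-1)$; the hypothesis $p > n/(n-2)$ is equivalent to $\alpha < (n-2)/2$, which makes $K := 2\alpha\bigl(n - 2(\alpha+1)\bigr)$ strictly positive. Choose $\varepsilon := (K/\lambda)^{1/(p-1)}$ and consider the base profile $\tilde u_0(r) := a + \varepsilon(1+r^2)^{-\alpha}$. A direct computation yields
\begin{equation*}
\Delta \tilde u_0(r) = 2\alpha\varepsilon(1+r^2)^{-\alpha-2}\bigl[(2(\alpha+1) - n)r^2 - n\bigr],
\end{equation*}
which is strictly negative on $[0,\infty)$; moreover, the choice of $\varepsilon$ is exactly what makes $\Delta\tilde u_0(r)/(\tilde u_0(r)-a)^p \to -\lambda$ as $r\to\infty$. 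If I define $W'(u) := \Delta\tilde u_0\circ\tilde u_0^{-1}(u)$ on $(a, a+\varepsilon]$, extend $W'$ smoothly to a negative function on $[a+\varepsilon, b]$ with $W'(b)=0$ for some $b>a+\varepsilon$, and set $W'\equiv 0$ on $(-\infty, a]$, then properties (a), (b), (c) and \eqref{incl2w} follow at once; a short asymptotic computation shows $W''(u)\to 0$ as $u\to a^+$, so $W\in C^2(\R)$.

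The only obstruction is (d). Differentiating the identity $W'(\tilde u(r))=\Delta\tilde u(r)$ and applying L'H\^opital at $r=0$ gives $W''(\tilde u(0)) = 4(n+2)\,c_4/c_2$, where $c_2,c_4$ are the Taylor coefficients of $\tilde u(r)-\tilde u(0)$ in $r^2$ and $r^4$. For the base profile one has $c_2 = -\alpha\varepsilon<0$ but $c_4 = \alpha(\alpha+1)\varepsilon/2>0$, so $W''(u(0))<0$. To flip the sign of $c_4$ without disturbing the far-field behaviour I would add a compactly supported perturbation: pick $\chi\in C_c^\infty([0,\infty))$ equal to $1$ on $[0,1/2]$ and $0$ on $[1,\infty)$, and set
\begin{equation*}
\tilde u(r) := \tilde u_0(r) - \gamma\, r^4\chi(r/R),
\end{equation*}
with $\gamma$ slightly larger than $\alpha(\alpha+1)\varepsilon/2$. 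The new $c_4=\alpha(\alpha+1)\varepsilon/2-\gamma$ is then negative, which yields $W''(u(0))=4(n+2)c_4/c_2>0$; since the perturbation vanishes for $r\geq R$, the asymptotic (a) is untouched.

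The remaining verification is to ensure that for $R$ small enough (depending on $\gamma$ and $\varepsilon$), $\tilde u$ remains strictly decreasing and $\Delta\tilde u$ remains strictly negative on $[0,\infty)$, so that $W'$ can still be defined by inversion on $(a,\tilde u(0)]$. On $[0,R/2]$ the perturbation adds terms of the same sign as the base, namely $-4\gamma r^3$ to $\tilde u'$ and $-4(n+2)\gamma r^2+O(r^4)$ to $\Delta\tilde u$, so both inequalities are preserved. On the transition annulus $[R/2,R]$ the perturbations are respectively $O(\gamma R^3)$ and $O(\gamma R^2)$, while $\tilde u_0'(r)$ is of order $\varepsilon R$ and $\Delta\tilde u_0(r)$ is bounded above by $-n\alpha\varepsilon/2$, so taking $R^2\ll \alpha\varepsilon/\gamma$ makes the base terms dominate. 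Beyond $r=R$ the profile is just $\tilde u_0$ and everything we need is already in place. The main obstacle is precisely this joint calibration of $\varepsilon$, $\gamma$ and $R$ so that (a)--(d) hold simultaneously; once this is done, the smooth extension of $W'$ beyond $a+\varepsilon$ and below $a$ is routine, and property (d) comes for free from the L'H\^opital formula.
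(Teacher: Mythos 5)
Your proof is correct, and the global strategy is identical to the paper's: prescribe a smooth decreasing radial profile $\tilde u$ with the right $r^{-2/(p-1)}$ asymptotics and $\Delta\tilde u<0$, define $W'$ on the range of $\tilde u$ by $W'(\tilde u(r))=\Delta\tilde u(r)$, and arrange the Taylor coefficients at $r=0$ so that $W''(\tilde u(0))=4(n+2)c_4/c_2>0$ (equivalently $\tilde u''(0)<0$ and $\tilde u^{(4)}(0)<0$). The two proofs differ only in how the profile is built. The paper starts from the singular solution $\tilde v(r)=C\,r^{-2/(p-1)}$ of $\Delta v=-\lambda v^p$, cuts off $\tilde v''$ via a cutoff $\xi$ to regularise it on $[1,3]$ (making $\tilde u$ affine on $[1,2]$), and then extends $\tilde u$ freely to an even function on $[0,1]$ imposing $\tilde u''(0)<0$, $\tilde u'''(0)=0$, $\tilde u^{(4)}(0)<0$; this gives unconstrained freedom near the origin but requires a small computation to check that $\Delta\tilde u<0$ propagates across the cutoff region. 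You instead work with the explicit, globally smooth bubble $\tilde u_0(r)=a+\varepsilon(1+r^2)^{-\alpha}$, which has $\Delta\tilde u_0<0$ everywhere for free and the correct asymptotic decay after the normalisation $\varepsilon=(K/\lambda)^{1/(p-1)}$. The trade-off is that this base profile has $c_4=\alpha(\alpha+1)\varepsilon/2>0$, i.e.\ the "wrong" sign for (d), so you add the compactly supported $-\gamma r^4\chi(r/R)$ bump and calibrate $R$ to keep both $\tilde u'<0$ and $\Delta\tilde u<0$. Your far-field analysis is cleaner (no cutoff to propagate through), at the cost of the calibration step near the origin; the paper has a cleaner origin (full freedom) at the cost of the cutoff argument on $[1,3]$. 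Both are valid, and the remaining steps ($C^1$ extension of $W'$ beyond $\tilde u(0)$ and below $a$, using $p>1$ to get $W''(a^+)=0$) are the same in both proofs.
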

\begin{proof}
Without loss of generality, we may assume that $a=0$. First, we note that the function $v(x)=\big(\frac{2((n-2)p-n)}{\lambda(p-1)^2}\big)^{\frac{1}{p-1}}|x|^{-\frac{2}{p-1}} $ solves the equation
$$\Delta v=-\lambda v^p\qquad\text{in $\R^n\backslash\{0\}$}.$$
Next, in order to eliminate the singularity at the origin, we take a smooth cutoff function $\xi:\R\to[0,1]$ such that
\begin{equation*}\begin{cases}
\text{$\xi=1$ in $[3,\infty)$,}\\
\text{$0<\xi<1$ and $\xi'>0$ in $(2,3)$,}\\
\text{$\xi=0$ in $(-\infty,2]$,}
\end{cases}
\end{equation*}
and we consider a function $\tilde u:(1,\infty)\to\R$ such that 
\begin{equation*}\begin{cases}
\tilde u''(r)=\xi(r)\tilde v''(r)&\forall\,r\in [1,\infty),\\
\tilde u(r)=\tilde v(r)&\forall r\geq3.
\end{cases}
\end{equation*}
where $v(x)=:\tilde{v}(|x|)$. One can see that 
\begin{equation}
\label{Deltau<0}
\tilde u''+\frac{n-1}{r}\tilde u'<0 \qquad\text{in $[1,\infty)$.}
\end{equation}
The latter inequality is clear if $r\ge 3$. In order to prove that (\ref{Deltau<0}) holds in $[1,3)$ too, we note that
\begin{equation}
\begin{aligned}
\tilde u'(r)&=-\int_r^\infty \tilde u''(t)dt=-\int_r^\infty \xi(t)\tilde v''(t)dt\\
&=\xi(r)\tilde v'(r)+\int_r^\infty \xi'(t)\tilde v'(t)dt<\xi(r)\tilde v'(r)\le 0,\qquad\forall\, r\in[1,3),
\end{aligned}
\end{equation}
so that
$$\tilde u''+\frac{n-1}{r}\tilde u'<\xi\Big(\tilde v''+\frac{n-1}{r}\tilde v'\Big)\le 0, \qquad\forall\,r\in[1,3).$$
Now, we extend $\tilde u$ to a smooth even positive function on the whole $\R$, still denoted by $\tilde u$, fulfilling $\tilde u'<0$ in $(0,\infty)$, $\tilde u''<0$ in $[0,1)$, so that $\tilde u''+\frac{n-1}{r}\tilde u'<0$ holds in $[0,\infty)$, $\tilde{u}'''(0)=0$ and $\tilde{u}^{(4)}(0)<0$. This can easily be done if we recall that $\tilde{u}$ is affine and decreasing on $[1,2]$. Since $\tilde u$ is monotone in $[0,\infty)$, then it is invertible in this interval with inverse function $\beta:(0,\tilde u(0)]\to[0,\infty)$. Finally, setting $$\varphi(r):=\tilde u''(r)+\frac{n-1}{r}\tilde u'(r),\qquad\forall\,r>0,$$
and $H(s):=\varphi(\beta(s))$, for $s\in(0,\tilde u(0)]$, one can see that $u(x):=\tilde u(|x|)$ satisfies the equation $\Delta u=H(u)$ in $\R^n$. We also notice that $H(\tilde u(0))=n\tilde u''(0)<0$ and $H'(\tilde u(0))=\frac{(n+2)\tilde u^{(4)}(0)}{3\tilde u''(0)}>0$. Thus, one can find a $C^1$ extension of $H$ to the whole $\R$, still denoted by $H$, such that $H< 0$ in $(0,b)$, for some $b>\tilde u(0)$, and $H(b)=0$. By construction, we have $H(u)=-\lambda u^p$ in $(0,\tilde u(3))$, so that $H(0)=H'(0)=0$. In order to conclude the proof it is enough to define $W$ to be the primitive of $H$.
\end{proof}

\begin{lemma}\label{lempp}
Given any $n\ge 3$, $p\in (\frac{n}{n-2},\frac{n+2}{n-2})$, and $\lambda>0$, there exists a potential $W\in C^2(\R)$ fulfilling \eqref{incl2w} and $\lim_{u\to a^+}\frac{W'(u)}{|u-a|^{p}}=-\lambda$, for which there are no radial solutions $u \in C^2(\R^n)$ of \eqref{scalar} such that $u(\R^n)\subset (a,b)$.
\end{lemma}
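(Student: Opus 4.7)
My plan is to construct $W$ so that the Pohozaev-type integrand $g(u) := -nW(u) + \tfrac{n-2}{2} u W'(u)$ is strictly positive on $(0, b]$, and then derive a contradiction from the radial Pohozaev identity on large balls. Without loss of generality $a = 0$. Fix $A > 0$ and $b > A$; set $W'(u) = -\lambda u^p$ on $[0, A]$, extend $W'$ on $[A, b]$ to drop from $-\lambda A^p$ to $0$ at $u = b$ (for concreteness, linearly in $u$, then smoothed to obtain $W \in C^2$), and set $W'(u) = 0$ for $u \geq b$. By construction $W'(u)/u^p \to -\lambda$ as $u \to 0^+$. A direct calculation yields $g(u) = \lambda u^{p+1}\tfrac{n+2-(n-2)p}{2(p+1)}$ on $[0, A]$, positive under the subcritical assumption $p < (n+2)/(n-2)$. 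On $[A, b]$, the dominant term $-nW(u) \geq n\lambda A^{p+1}/(p+1)$ outweighs the correction $\tfrac{n-2}{2} u W'(u)$, and the worst case $u = A$ reduces again to the same subcritical inequality, so $g > 0$ throughout $(0, b]$.

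Suppose for contradiction that $v(x) = \tilde v(|x|)$ is a radial $C^2$ solution with $v(\R^n) \subset (0, b)$. Standard ODE analysis on $\tilde v'' + \tfrac{n-1}{r}\tilde v' = W'(\tilde v)$, with $\tilde v'(0) = 0$ and $\tilde v''(0) = W'(\tilde v(0))/n < 0$, gives $\tilde v' < 0$ on $(0, \infty)$ (any interior critical point of $\tilde v$ would be a local minimum, incompatible with $\tilde v''(r_0) = W'(\tilde v(r_0)) < 0$). Hence $\tilde v$ decreases monotonically to a limit $L \in [0, \tilde v(0))$ that must be a zero of $W'$, forcing $L = 0$. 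Multiplying the radial equation by $r^n \tilde v'$ and integrating on $[0, R]$ produces
\[
\int_0^R r^{n-1} g(\tilde v(r))\, dr \;=\; \frac{R^n}{2}(\tilde v'(R))^2 \;-\; R^n W(\tilde v(R)) \;+\; \frac{n-2}{2} R^{n-1} \tilde v(R) \tilde v'(R).
\]

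To let $R \to \infty$, I need the sharp decay $\tilde v(r) \sim c\, r^{2-n}$, $\tilde v'(r) \sim -c\, r^{1-n}$. The lower bound $\tilde v(r) \gtrsim r^{2-n}$ comes from Lemma \ref{prop-lower-bound-superharm} applied to the positive superharmonic $\tilde v$; for the matching upper bound, I plan to use the Emden-Fowler reduction $\tilde v(r) = r^{-\alpha}\Phi(\log r)$ with $\alpha = 2/(p-1)$, which transforms the asymptotic dynamics into an autonomous planar ODE in $(\Phi, \dot\Phi)$. The existence of a nontrivial equilibrium $\Phi^* > 0$ is guaranteed by $p > n/(n-2)$, and its linearization has eigenvalues with positive real part precisely when $p < (n+2)/(n-2)$, so under both hypotheses $\Phi^*$ is unstable. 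A trajectory corresponding to $\tilde v > 0$ for all $r$ can therefore neither converge to $\Phi^*$ nor oscillate around it into the region $\Phi < 0$, and must lie on the 1-dimensional stable manifold of the origin, which forces the fast decay above. Inserting these asymptotics, the boundary terms scale like $R^{2-n}$ and $R^{n-(n-2)(p+1)}$, both going to $0$ (the latter uses $(n-2)p > 2$, implied by $p > n/(n-2)$). The right-hand side vanishes, contradicting the positivity of $\int_0^\infty r^{n-1} g(\tilde v(r))\, dr$. The main obstacle is this decay analysis: ruling out slow-decay and sign-changing trajectories requires a careful phase-plane study that genuinely exploits both hypotheses $n/(n-2) < p < (n+2)/(n-2)$, the lower one for the instability of $\Phi^*$ and the existence of a self-similar profile, the upper one both for positivity of $g$ and integrability of the $R^n W(\tilde v)$ boundary term.
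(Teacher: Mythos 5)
Your overall strategy is the same as the paper's: build a potential for which the Pohozaev integrand $g(u)=\tfrac{n-2}{2}uW'(u)-nW(u)$ is strictly positive on $(0,b]$, establish subcritical decay of a putative radial solution, and then let $R\to\infty$ in the radial Pohozaev identity to reach a contradiction between the strictly positive left side and the vanishing right side. The construction of $W$ is also essentially identical (the paper uses a $C^1$ extension $H$ of $-\lambda u^p$ from $[0,\beta]$ to $[0,b]$ with $b=\kappa\beta$, and takes $W$ to be its primitive). One small omission on your side: for the claim ``$g>0$ on $(0,b]$'' to hold you must keep $b$ sufficiently close to $A$. Since $W'\le 0$ the correction $\tfrac{n-2}{2}uW'(u)$ is \emph{negative}, so you need $\tfrac{n-2}{2}\,b\,\sup_{[A,b]}|W'| < n|W(A)| = \tfrac{n\lambda A^{p+1}}{p+1}$; with $|W'|\le\lambda A^p$ on $[A,b]$ this forces $b<\tfrac{2n}{(n-2)(p+1)}A$, a bound that is available precisely because $p<\tfrac{n+2}{n-2}$. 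The paper states the analogous constraint explicitly as $\kappa\mu<1+\tfrac{2\epsilon}{n-2}$; you should too, since ``the worst case reduces to the subcritical inequality'' is only true once $b/A$ is small enough.

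Where you genuinely diverge from the paper, and where the difficulty you flag is of your own making, is the decay analysis. You claim to need the sharp rate $\tilde v(r)\sim cr^{2-n}$ and propose an Emden--Fowler reduction $\tilde v=r^{-2/(p-1)}\Phi(\log r)$ together with a phase-plane study (instability of $\Phi^*$, stable manifold of the origin, excluding oscillatory and periodic behaviour). This is neither necessary nor easy to complete rigorously: ruling out periodic orbits and proving the solution lies exactly on the stable manifold requires additional work (a Dulac/Lyapunov argument), and you acknowledge this is the ``main obstacle''. The point is that you do \emph{not} need the sharp rate $r^{2-n}$. It suffices to repeat the elementary ODE argument of Proposition \ref{p1b} with the exponent $n/(n-2)$ replaced by $p$: you already know $\tilde v'<0$ on $(0,\infty)$, and once $\tilde v$ enters the region where $W'(u)\le -c\,u^p$, integrating $\big(r^{n-1}\tilde v'\big)'=r^{n-1}W'(\tilde v)$ and using monotonicity gives $r^{n-1}\tilde v'(r)\le -k\,\tilde v(r)^p\,r^n$ for large $r$, hence $\tilde v(r)=O\big(r^{-2/(p-1)}\big)$. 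Feeding this back into the same integral gives $\tilde v'(r)=O\big(r^{-(p+1)/(p-1)}\big)$, and $W(\tilde v(r))=O\big(r^{-2(p+1)/(p-1)}\big)$. With these weaker bounds the three boundary terms in your Pohozaev identity scale as $R^{\,n-1-(p+3)/(p-1)}$ and $R^{\,n-2(p+1)/(p-1)}$, both of which tend to $0$ exactly when $p<\tfrac{n+2}{n-2}$ — and this is the only place where the upper bound on $p$ enters the decay argument. (Your observation that $p>\tfrac{n}{n-2}$ is needed for $R^nW(\tilde v(R))\to 0$ under the \emph{lower} bound $\tilde v\gtrsim r^{2-n}$ is correct but superfluous once you use the $r^{-2/(p-1)}$ upper bound.) In short: replace the phase-plane machinery with the direct integration used in Proposition \ref{p1b}, state the constraint on $b/A$ in the construction, and your proof closes cleanly along the paper's lines.
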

\begin{proof}
Without loss of generality, we may assume that $a=0$. We consider the function $H(u)=-\lambda u^p$ on an interval $[0,\beta]$, and  since $p\in (\frac{n}{n-2},\frac{n+2}{n-2})$, we set $\epsilon=\frac{n}{p+1}-\frac{n-2}{2}>0$. One can find a $C^1$ extension of $H$ to the whole $\R$, still denoted by $H$, such that 
\begin{itemize}
\item $H< 0$ in $(0,b)$, and $H(b)=0$, for some $b>\beta$. Let $b=\kappa\beta$, with $\kappa>1$.
\item $H([0,b])=[-\lambda\mu\beta^p,0]$ for some $\mu>1$, such that $\kappa\mu<1+\frac{2\epsilon}{n-2}$.
\end{itemize}
Next, define $W\in C^2(\R)$ to be the primitive of $H$ vanishing at $0$. We claim that
\begin{equation}\label{claim123}
\frac{n-2}{2}W'(u)u-n W(u)>0 \text{ on } (0,b].
\end{equation}
Indeed, we have $\frac{n-2}{2}W'(u)u-n W(u)=\epsilon\lambda u^{p+1}$ on $[0,\beta]$. On the other hand, if $u\in [\beta,b]$, then it follows that
$\frac{n-2}{2}W'(u)u-n W(u)\geq  \frac{n-2}{2}W'(u)u -nW(\beta)\geq (\frac{n}{p+1}-\frac{n-2}{2}\kappa\mu)\lambda \beta^{p+1}>0$. Now that \eqref{claim123} is established, we consider a radial solution $u \in C^2(\R^n)$ of \eqref{scalar} such that $u(\R^n)\subset (0,b)$. Setting $v(|x|)=u(x)$ and proceeding as in the proof of Proposition \ref{p1b}, one can see that $v$ satisfies the standard estimates $v(r)=O(r^{-\frac{2}{p-1}})$, $v'(r)=O(r^{-\frac{p+1}{p-1}})$, and $W(v(r))=O(r^{-\frac{2(p+1)}{p-1}})$. To conclude we use the well-known Pohozaev identity:
\begin{equation}\label{po123}
\int_0^r s^{n-1} (\frac{n-2}{2}W'(v(s))v(s)-n W(v(s))\big)ds=\frac{n-2}{2}r^{n-1}v(r)v'(r)+r^n\big(\frac{|v'(r)|^2}{2}-W(v(r))\big).
\end{equation}
We notice that since $p\in (\frac{n}{n-2},\frac{n+2}{n-2})$, the right hand side of \eqref{po123} goes to $0$, as $r\to\infty$. On the other hand, the left hand side of \eqref{po123} is strictly positive in view of \eqref{claim123}. This rules out the existence of radial solutions such that $u(\R^n)\subset (0,b)$.

\end{proof}

The next Proposition examines the existence of radial solutions in the different regimes. 
\begin{proposition}
Let $n\geq 3$, and let $W\in C^{1,1}_{loc}(\R)$ be a potential satisfying \eqref{incl2w}.
\begin{itemize}
\item[(i)] If \eqref{nondeg2} holds, there are no radial solutions $u \in C^2(\R^n)$ of \eqref{scalar} such that $u(\R^n)\subset (a,b)$.
\item[(ii)] If  $\limsup_{u\to a^+}\frac{|W'(u)|}{|u-a|^{\frac{n+2}{n-2}}}=0$ holds, there exists a radial solution $u \in C^2(\R^n)$ of \eqref{scalar} such that $u(\R^n)\subset (a,b)$. 
\item[(iii)] Otherwise, if neither \eqref{nondeg2} nor $\limsup_{u\to a^+}\frac{|W'(u)|}{|u-a|^{\frac{n+2}{n-2}}}=0$ hold, depending on $W$, there may or may not exist a radial solution $u \in C^2(\R^n)$ of \eqref{scalar} such that $u(\R^n)\subset (a,b)$.
\end{itemize}
\end{proposition}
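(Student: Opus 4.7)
The plan is to handle the three regimes in order, exploiting that radial reduction turns \eqref{scalar} into $v''+\frac{n-1}{r}v'=W'(v)$ with $v'(0)=0$, and combining this ODE with Proposition \ref{p1b} in (i), a variational construction in (ii), and the preceding Lemmas \ref{prop1} and \ref{lempp} in (iii).

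For (i), I would argue by contradiction. Suppose a radial solution $u(x)=v(|x|)$ with $v([0,\infty))\subset(a,b)$ exists. Proposition \ref{p1b} applied on $\R^n\setminus\overline{B_\rho}$ for any $\rho>0$ forces $v(r)\to b$ as $r\to\infty$. The ODE at the origin gives $n\,v''(0)=W'(v(0))<0$, so $v$ strictly decreases on a right-neighbourhood of $0$. If $r_0>0$ were the first zero of $v'$, then $v(r_0)\in(a,b)$, the ODE would yield $v''(r_0)=W'(v(r_0))<0$, but since $v'<0$ on $(0,r_0)$ and $v'(r_0)=0$, a one-sided difference-quotient argument forces $v''(r_0)\geq 0$, a contradiction. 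Hence $v'<0$ throughout $(0,\infty)$, so $v(r)\leq v(0)<b$, contradicting $v(r)\to b$.

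For (ii), I would translate \eqref{scalar} into $-\Delta v=h(v)$ with $v=u-a\geq 0$, $h(v)=-W'(v+a)$, and primitive $H(v)=\int_0^v h=W(a)-W(v+a)\geq 0$ on $[0,b-a]$, extending $h\equiv 0$ and $H\equiv H(b-a)$ outside $[0,b-a]$. The hypothesis translates into the supercritical vanishing $H(v)=o(v^{2n/(n-2)})$ as $v\to 0^+$. I would then seek a non-trivial radial critical point of
\[J(v)=\int_{\R^n}\tfrac{1}{2}|\nabla v|^2\,dx-\int_{\R^n}H(v)\,dx\]
on $H^1_r(\R^n)$ by the mountain pass theorem. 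The geometry is transparent: Sobolev's inequality $\|v\|_{L^{2n/(n-2)}}\leq C\|\nabla v\|_{L^2}$ combined with $H(v)=o(v^{2n/(n-2)})$ yields $\int H(v)\leq \tfrac{1}{4}\|\nabla v\|_{L^2}^2$ whenever $\|\nabla v\|_{L^2}$ is small, so $v=0$ is a strict local minimum; and for any fixed $\bar v\in C_c^\infty(\R^n)$ with values in $[0,b-a]$ and $\bar v\not\equiv 0$, the dilations $v_t(x)=\bar v(x/t)$ satisfy $J(v_t)\sim t^{n-2}\|\nabla\bar v\|_{L^2}^2/2-t^n\int H(\bar v)\to -\infty$. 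Palais--Smale compactness on the radial subspace would be secured by Strauss's compact embedding $H^1_r(\R^n)\hookrightarrow L^p(\R^n)$ for $p\in(2,2n/(n-2))$, together with a truncation at $v=b-a$ justified by the fact that $v\equiv b-a$ is itself a solution, so the maximum principle confines critical points to $[0,b-a]$. A non-trivial critical point $v$ then yields $u=v+a$, a radial solution with $u(\R^n)\subset(a,b)$.

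For (iii), pick any $p\in (\tfrac{n}{n-2},\tfrac{n+2}{n-2})$ and any $\lambda>0$; a potential with $W'(u)\sim -\lambda(u-a)^p$ as $u\to a^+$ violates both \eqref{nondeg2} (since $p>n/(n-2)$) and the limsup condition (since $p<(n+2)/(n-2)$), hence lies in regime (iii). Lemma \ref{prop1} produces such a potential admitting a radial solution $u:\R^n\to(a,b)$, whereas Lemma \ref{lempp} produces another such potential for the same $p$ admitting no radial solution with values in $(a,b)$, which establishes the stated dichotomy.

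The hardest step is the Palais--Smale compactness in (ii): since $h$ is non-negative and vanishes (rather than being linearly coercive) at $v=0$, the classical Berestycki--Lions framework does not apply verbatim, and one must carefully exploit both the supercritical decay of $h$ near $0$ and the truncation at $b-a$ to rule out mass loss at infinity or concentration at small scales.
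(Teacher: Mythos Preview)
Your arguments for (i) and (iii) are correct and coincide with the paper's: for (i) you invoke Proposition~\ref{p1b} and derive the contradiction via the ODE (the paper states more tersely that such a radial solution ``decays to $a$'', which is incompatible with Proposition~\ref{p1b}); for (iii) both you and the paper appeal to Lemmas~\ref{prop1} and~\ref{lempp}.

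For (ii) the paper does not attempt a self-contained variational construction. It simply truncates $W$ above $b$ and invokes Theorem~4 of Berestycki--Lions \cite{BL}, which is precisely the \emph{zero-mass} existence theorem: the hypothesis $\limsup_{u\to a^+}|W'(u)|/|u-a|^{(n+2)/(n-2)}=0$ is exactly their condition $g(s)=o(s^{(n+2)/(n-2)})$ at $0$, the truncation makes the condition at infinity trivial, and $H(b-a)=W(a)-W(b)>0$ furnishes the required $\zeta$. The maximum principle then forces $u<b$. So your remark that ``the classical Berestycki--Lions framework does not apply verbatim'' misses the point: it is Theorem~4, not Theorem~1, that is being used, and it applies directly.

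Your own mountain-pass sketch has a genuine gap beyond the Palais--Smale issue you flag. In the zero-mass regime the expected decay of solutions is $v(x)\sim |x|^{2-n}$, which lies in $L^2(\R^n)$ only when $n\geq 5$; for $n=3,4$ the ground state is not in $H^1(\R^n)$, so working in $H^1_r$ cannot produce it. The correct ambient space is $D^{1,2}_r(\R^n)$, and then the Strauss compact embedding you invoke is no longer available as stated. Berestycki--Lions circumvent this by constrained minimisation rather than a direct mountain pass with Palais--Smale, which is why the paper defers to their result instead of redoing the analysis.
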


\begin{proof}
(i) A radial solution $u \in C^2(\R^n)$ of \eqref{scalar} such that $u(\R^n)\subset (a,b)$, decays to $a$, as $|x|\to\infty$. In view of Proposition \ref{p1b}, it is clear that such a solution does not exist when \eqref{nondeg2} holds.

(ii) Now, assume that $\limsup_{u\to a^+}\frac{|W'(u)|}{|u-a|^{\frac{n+2}{n-2}}}=0$ holds, and define
\begin{equation}
\tilde W(v)=\begin{cases}
W(v) &\text{ for }  v\leq b\\
W(b) &\text{ for } v\geq b.
\end{cases}
\end{equation}
Theorem 4 of \cite{BL} provides the existence of a radial solution $u \in C^2(\R^n)$ of $\Delta u=\tilde W'(u)$, such that $u>a$, and $\lim_{|x|\to\infty}u(x)=a$. By the maximum principle, we have $u(\R^n)\subset (a,b)$, and thus $u$ solves $\Delta u=W'(u)$.

Finally, (iii) follows from Lemmas \ref{prop1} and \ref{lempp}.
\end{proof}

As we mentioned in the Introduction, for general domains, condition \eqref{nondeg2} is not sufficient to derive the asymptotic property \eqref{as1} of solutions. Proposition \ref{p2b} below, provides examples of solutions having a different asymptotic behaviour.

\begin{proposition}\label{p2b}
Let $p>1$, and let $W\in C^{1,1}_{loc}(\R)$ be a potential fulfilling \eqref{incl2w}, as well as 
\begin{equation}\label{chk0}
\forall u \in [a,b]: \ W'(u)\geq -c (u-a)^p,  \text{ for a constant $c>0$.}
\end{equation}
Let $D=\{x\in\R^2: |x_2|<\psi(x_1)\}$, where $\psi\in C^\infty(\R)$ is a positive function such that $\psi(s)=\lambda|s|$, for $|s|>\epsilon$ (with $\lambda \, ,\epsilon>0 $ sufficiently small, depending on $W$). Then, there exists a solution $u\in C^2(D)$ of \eqref{scalar} such that $u(D)\subset(a,b)$, and
\begin{equation}\label{doublel}
\lim_{x_1\to+\infty}u(x)=a \text{ and } \lim_{x_1\to-\infty}u(x)=b.
\end{equation}
\end{proposition}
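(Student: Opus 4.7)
The strategy is to apply the sub- and super-solution method \cite[Lemma 1.1.1]{Dup} (as in the proof of Proposition \ref{p1b}), with explicit barriers built from positive harmonic functions in the two cones $C^{\pm} = \{x \in \R^2 : \pm x_1 > 0,\; |x_2| < \lambda |x_1|\}$. Writing $(r,\theta)$ for polar coordinates with $\theta$ measured from the positive $x_1$-axis and $\theta_0 := \arctan \lambda$, we have $C^+ = \{|\theta|<\theta_0\}$. The basic building blocks are the harmonic functions $r^{-\mu}\cos(\nu\theta)$, which are positive in $C^+$ as soon as $\nu \theta_0 < \pi/2$.

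\textbf{Supersolution.} Fix exponents $\tfrac{2}{p-1} < \mu < \nu < \tfrac{\pi}{2\theta_0}$, which is possible provided $\lambda$ is small enough that $\theta_0 < \pi(p-1)/4$. Set $\phi_+(x) := A\,|x|^{-\mu}\cos(\nu\theta)$ on $C^+$, with $A>0$ to be chosen. A direct computation gives $\Delta\phi_+ = (\mu^2-\nu^2)A|x|^{-\mu-2}\cos(\nu\theta) < 0$, and the supersolution inequality $\Delta(a+\phi_+) \le -c\phi_+^p \le W'(a+\phi_+)$ reduces, via \eqref{chk0}, to
$$ (\nu^2-\mu^2)\,|x|^{\mu(p-1)-2} \;\geq\; c A^{p-1}\cos^{p-1}(\nu\theta). $$
Since $\mu(p-1) > 2$ and $|x|$ is bounded below on $D\cap C^+$, this holds for $A$ sufficiently small. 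Define
$$ \bar u(x) := \begin{cases} a + \phi_+(x) & x \in D\cap C^+,\\ b & x \in D\setminus C^+.\end{cases} $$
Then $\bar u$ is a super-solution of \eqref{scalar} on $D$ (interpreted in the viscosity/$\min$ sense at the cone boundary, where $\bar u$ jumps up to $b$), with $a < \bar u \leq b$ and $\lim_{x_1\to+\infty}\bar u(x) = a$ uniformly in $x_2$.

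\textbf{Subsolution.} Because $W'\leq 0$ on $[a,b]$, any function $b - \phi$ with $\phi\geq 0$ subharmonic is automatically a subsolution of \eqref{scalar}. Pick $\mu' < \nu' < \tfrac{\pi}{2\theta_0}$ and set $\phi_-(x) := A'|x|^{-\mu'}\cos(\nu'\vartheta)$ in $C^-$, where $\vartheta$ is the polar angle measured from the negative $x_1$-axis. Then $\Delta\phi_-<0$, and
$$ \underline u(x) := \begin{cases} \max\bigl(a,\, b-\phi_-(x)\bigr) & x \in D\cap C^-,\\ a & x \in D\setminus C^-,\end{cases} $$
is a subsolution of \eqref{scalar} with $a\leq \underline u<b$ and $\lim_{x_1\to-\infty}\underline u(x) = b$.

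\textbf{Conclusion and main obstacle.} Since $\underline u>a$ only on $C^-$ while $\bar u = b$ there, we have $\underline u \leq \bar u$ on $D$. The sub- and super-solution method then yields a solution $u \in C^2(D)$ of \eqref{scalar} with $\underline u \leq u \leq \bar u$; the squeeze immediately produces \eqref{doublel}, and the strong maximum principle applied to $u-a\geq 0$ and $b-u\geq 0$ (whose linearised coefficients are bounded thanks to the $C^{1,1}_\loc$ regularity of $W'$) upgrades the inclusion to $u(D)\subset(a,b)$. The principal technical difficulty is that neither $\bar u$ nor $\underline u$ is $C^2$: the $\min$/$\max$ switching sets and the cone boundaries of $C^\pm$ introduce Lipschitz corners, and $\bar u$ is genuinely discontinuous across $\partial C^+$. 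This is resolved either by using the weak/viscosity version of the method (the minimum of super-solutions is a super-solution, and dually for subsolutions), or by mollifying the barriers on thin neighbourhoods of the interfaces and checking that the strict inequalities above survive; with the flexibility in choosing $A, A', \mu, \nu, \mu', \nu'$, such a regularisation is routine.
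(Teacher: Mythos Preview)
Your overall strategy---an ordered pair of barriers built in the two cones $C^\pm$ and the sub/super\-solution method---is precisely the route the paper takes. The paper's supersolution is $\min(f,b)$ with $f(re^{i\theta})=r^{-2/(p-1)}g(\theta)$ an \emph{exact} separated-variables solution of $\Delta f=-cf^p$ (where $g$ solves the ODE $g''=-cg^p-\tfrac{4}{(p-1)^2}g$); its subsolution is the one-dimensional heteroclinic $e(x_1)$ for $x_1<0$, extended by $a$ for $x_1\ge 0$. Your barriers $r^{-\mu}\cos(\nu\theta)$ are more elementary, which is a legitimate variant, but one step is wrong as written.

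The assertion that ``$|x|$ is bounded below on $D\cap C^+$'' is false: the vertex of $C^+$ is the origin, and since $\psi(0)>0$ the origin belongs to $\overline D$, so $D\cap C^+$ contains points with $|x|$ arbitrarily small. As $r\to 0$ inside $C^+$ your function $\phi_+(x)=A r^{-\mu}\cos(\nu\theta)$ blows up, hence $\bar u=a+\phi_+$ is unbounded and exceeds $b$ (so \eqref{chk0} is no longer available), and in any case the key inequality $(\nu^2-\mu^2)\,r^{\mu(p-1)-2}\ge cA^{p-1}\cos^{p-1}(\nu\theta)$ \emph{fails} as $r\to 0$ because the exponent $\mu(p-1)-2$ is positive. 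So $\bar u$ is not a supersolution near the neck for any $A>0$, and the ``discontinuity'' you identify as the main obstacle is not the real issue.

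The repair is exactly what the paper does: cap with $b$, i.e.\ set $\bar u:=\min\big(a+\phi_+,\,b\big)$ on $D$ (equivalently, put $\bar u=b$ on $\{x_1\le\epsilon\}$, where $D$ is strictly wider than $C^+$, and $\bar u=\min(a+\phi_+,b)$ on $\{x_1>\epsilon\}$, where $\partial C^+=\partial D$). On the set $\{a+\phi_+<b\}$ one has $r^\mu>A\cos(\nu\theta_0)/(b-a)$, hence $r$ is bounded below, and a short computation shows the supersolution inequality there reduces to $cA^{2/\mu}\le (\nu^2-\mu^2)(b-a)^{2/\mu-(p-1)}$, which does hold for $A$ small. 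With this capping the barrier is Lipschitz, the Kato inequality handles the interface $\{a+\phi_+=b\}$, no discontinuity remains, and your concluding paragraph (ordering, strong maximum principle for the strict inequalities $a<u<b$) then goes through. Your subsolution in $C^-$ is also valid once you note that $A'$ (or $\epsilon$) must be chosen so that $\phi_-\ge b-a$ on $\partial C^-\cap D$, ensuring $\underline u$ is continuous; the paper's one-dimensional heteroclinic avoids this bookkeeping entirely.
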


\begin{proof}
Without loss of generality we may assume that $a=0$. We shall first construct a supersolution $\phi^*$ of \eqref{scalar} in $D$. We define the auxilliary functions
\begin{subequations}
\begin{equation}
f(re^{i\theta})=r^{-\frac{2}{p-1}}g(\theta),
\end{equation}
with $g:[-\theta_0,\theta_0]\to(0,\infty)$ ($\theta_0<\frac{\pi}{2}$), a positive solution of the O.D.E.:
\begin{equation}
g''(\theta)=-c g^p(\theta)-\frac{4}{(p-1)^2}g(\theta).
\end{equation}
\end{subequations}
Next, setting $\lambda=\tan(\theta_0)$, one can check that 
\begin{equation}\label{chk1}
\Delta f(x)=-c(f(x))^p \text{ in the sector $S=\{x_1>0, |x_2|<\lambda x_1\}$}.
\end{equation}
In addition, we have $f(x)>b$ in the set $\{0<x_1\leq \epsilon, |x_2|<\lambda x_1\}$, provided that $\epsilon>0$ is sufficiently small.
Finally, we take 
\begin{equation}
\phi^*(x)=\begin{cases}
 \min (f(x),b) &\text{ when } x_1>\epsilon,\text{ and } |x_2|<\lambda x_1.\\
b &\text{ when } x_1\leq\epsilon,\text{ and } |x_2|<\psi(x_1).
\end{cases}
\end{equation}
Using the Kato inequality, one can see that $\phi^*$ is a supersolution of \eqref{scalar} in $D$. Indeed, in view of \eqref{chk0} and  \eqref{chk1}, we have
\begin{equation}\label{chk2}
\Delta \phi^*\leq -cf^p \chi_{\{f<b\}}\leq W'(\phi^*) \text{ in } H^1_{loc}(D),
\end{equation}
where $\chi$ is the characteristic function. 

To construct a subsolution $\phi_*$ of \eqref{scalar} in $D$, we take 
\begin{equation}
\phi_*(x)=\begin{cases}
 e(x_1) &\text{ when } x_1<0,\text{ and } |x_2|<\psi(x_1)\\
0 &\text{ when } x_1\geq 0,\text{ and } |x_2|<\psi(x_1),
\end{cases}
\end{equation}
where $e:(-\infty,0]\to [0,b)$ is the heteroclinic orbit, solving
\begin{equation}
e''(s)=W'(e(s)), \ e(0)=0, \lim_{s\to-\infty}e(s)=b.
\end{equation}
The existence of such a heteroclinic orbit is proved by extending $W$ to an even $C^{1,1}(\R)$ function $\tilde{W}$ such that $\tilde{W}=W$ in $(0,b)$, $\tilde{W}'>0$ in $(b,\infty)$ and considering the phase plane for the ODE $v''=\tilde{W}'(v)$. The situation is analogue to the one we have for the classical double well potential $\frac{1}{4}(1-t^2)^2$.\\ 

It follows again from the Kato inequality that $\Delta \phi_*\geq W'(\phi_*)$ holds in $H^1_{loc}(D)$. In addition, it is clear that $\phi_*< \phi^*$ holds in $D$. Therefore, we deduce from the method of sub- and supersolutions (cf. Section \ref{sec:A1}, and for instance \cite[Lemma 1.1.1]{Dup}), the existence of a solution  $u\in C^2(D)$ of \eqref{scalar} satisfying $\phi_*\leq u \leq \phi^*$. Since $0<u<b$ by the maximum principle, the solution $u$ has all the desired properties.
\end{proof}

Now, we are ready to prove Theorems \ref{th1} and \ref{th2}, and their corollaries.

\begin{proof}[Proof of Theorem \ref{th1}]
(i) Assume $u\in C^2(\R^n)$ is an entire solution of \eqref{scalar} such that $u(\R^n)\subset[a,b]$. When $n=2$, $u$ is a bounded superharmonic function defined on $\R^2$. Thus, $u$ is constant and equal to a critical point of $W$. That is, $u\equiv a$ or $u \equiv b$.\\ 

In higher dimensions $n \geq 3$, we have by the maximum principle either $u\equiv a$, or $a<u\leq b$ on $\R^n$. We shall first assume that $a<u\leq b$ as well as \eqref{nondeg2} hold, and we shall prove that $u\equiv b$. In view of \eqref{nondeg2}, Proposition \ref{p1b} implies that $\lim_{|x|\to\infty} u(x)=b$, and $a+\epsilon\leq u\leq b$ holds on $\R^n$, for some $\epsilon>0$. Thus, $u \equiv b$, by Lemma \ref{lem1}.\\

Next, we consider again an entire solution $u$ of \eqref{scalar} satisfying $u(\R^n)\subset[a,b]$ in dimensions $n\geq 3$, but without assuming \eqref{nondeg2}.
By the maximum principle, we have either $u\equiv a$, or $u \equiv b$, or $a<u<b$. Let
$$\mathcal F=\{\text{$u$ is a solution of \eqref{scalar} such that } u(\R^n)\subset(a,b)\},$$
$$C_W=\sup\{u(x): x\in \R^n, u \in \mathcal F\}.$$
Our first claim is that 
\begin{equation}
\label{infF<b}
C_W<b.
\end{equation}
Indeed, assume by contradiction that there exists a sequence $\{u_k\}\subset\mathcal F$, and a sequence $\{x_k\}\subset\R^n$, such that $\lim_{k\to\infty} u_k(x_k)=b$. Setting $v_k(y)=u_k(x_k+y)$, and proceeding as in Lemma \ref{lem1}, we obtain that (up to subsequence) $v_k$ converges in $C^2_{\mathrm{loc}}(\R^n)$ to an entire solution $v_\infty$ of (\ref{scalar}). Furthermore, since $v_\infty(0)=b$, the maximum principle implies that $v_\infty\equiv b$. At this stage we consider 
the minimizer $\phi_R\in H^1(B_R(0))$ defined in \eqref{subp}. 
It is known that $\phi_R$ is a smooth radial solution of \eqref{scalar} in $B_R(0)$, such that $a\leq \phi_R\leq b-\delta_R$ on $B_R(0)$, for some $\delta_R>0$. In addition, by taking $R>R_0$ large enough, we have $a<\phi_R\leq b-\delta_R$ on $B_R(0)$. Thus, for fixed $R>R_0$, we can ensure that $a<\phi_R\leq b-\delta_R\leq v_k$ holds on $B_R(0)$, provided that $k\geq k_R$ is large enough. Finally, by applying the sliding method of  Berestycki, Caffarelli, and Nirenberg \cite[Lemma 3.1]{BCN}, we deduce that for $k\geq k_R$, $v_k$ as well as $u_k$ are entire solutions of \eqref{scalar} satisfying respectively $v_k(\R^n)\subset [a+\epsilon_R,b]$, and  $u_k(\R^n)\subset [a+\epsilon_R,b]$, with $\epsilon_R:=\phi_R(0)-a>0$. In view of Lemma \ref{lem1}, this implies that $u_k\equiv b$, for $k\geq k_R$, which is a contradiction. This proves (\ref{infF<b}).\\

The fact that $\liminf_{|x|\to\infty}u(x)=a$ holds for every $u\in\mathcal F$ also follows from Lemma \ref{lem1}. Indeed, assuming by contradiction that $\liminf_{|x|\to\infty}u(x)>a$ we would obtain that $u(\R^n)\subset [a+\epsilon,b]$, for some $\epsilon>0$. Therefore, using Lemma \ref{lem1}, we conclude that $u\equiv b$, which is a contradiction.\\

(ii) Now, assume the domain $D$ satisfies \eqref{inrad}, and $u\in C^2(D)$ is a solution of \eqref{scalar} such that $u(D)\subset(a,b]$. In the nondegenerate case where \eqref{nondeg} holds, \cite[Lemma 3.2]{BCN} implies that $a+\epsilon<u(x)\leq b$ holds for some $\epsilon>0$, provided that $d(x,\partial D)>\eta$, for some $\eta>0$. Thus, in view of Lemma \ref{lem1}, we have  $\lim_{d(x,\partial D)\to\infty}u(x)=b$.\\ 

\end{proof}

\begin{proof}[Proof of Theorem \ref{th2}]

On the one hand, since $$\sup_{\R^n}u=b,$$ 
let $\{x_k\}_{k\in\N}\subset\R^n$ be a sequence such that $\lim_{k\to\infty} u (x_k)=b$, and set $v_k(y)=u(x_k+y)$. Proceeding as in the proof of Theorem \ref{th1}, one can see that (up to subsequence), $v_k$ converges in $C^2_{\mathrm{loc}}(\R^n)$ to an entire solution $v_\infty\equiv b$. In particular, given $R>0$ and $\delta>0$, we have $u(x)\in [b-\delta,b]$, provided that $x \in B_R(x_k)$, and $k\geq K(R,\delta)$ is large enough.

On the other hand, let $\iota:=\inf_{\R^n}u\leq b$, and assume by contradiction that $\iota<b$ and $W(\iota)>0$. Next, define the auxiliary potential
\begin{equation*}
\tilde W(u)=\begin{cases}
W(u)&\text{for } u\geq \iota\\
W(\iota)&\text{for } u\leq \iota,
\end{cases}
\end{equation*}
and consider a minimiser $\phi_R\in H^1(B_R(0))$ of the energy functional 
\begin{equation*}
\tilde E(v)=\int_{B_R(0)}\Big(\frac{1}{2}|\nabla v(x)|^2+\tilde W(v(x))\Big)d x,
\end{equation*}
in the class $\mathcal A=\{v\in H^1(B_R(0)), \, v=\iota \text{ on }\partial B_R(0)\}$. Setting $\sigma:=\min\{ t\geq\iota: W(t)=0\}$, and $\sigma_R:=\sup_{B_R(0)} \phi_R$, one can see that $$\iota\leq \phi_R\leq \sigma_R<\sigma$$ 
holds for every $R>0$, since $W(\iota)>0$. In addition, $\phi_R$ is a smooth radial solution of \eqref{scalar} in $B_R(0)$, such that $\lim_{R\to\infty} \sigma_R=\sigma$. Thus by taking $R>0$ large enough, we can ensure that $\iota<\sigma_R<b$. As a consequence, we also have $\phi_R(y)\leq u(y+x_k)$, provided that $y \in B_R(0)$, and $k\geq K(R,\delta)$. Finally, by applying the sliding method of  Berestycki, Caffarelli, and Nirenberg \cite[Lemma 3.1]{BCN}, we deduce that for $u \geq\sigma_R>\iota$, holds on $\R^n$, which is a contradiction.

So far we have established that $W(\iota)=0$, so that $\iota=\sigma$. To complete the proof of Theorem \ref{th2}, it remains to show that $\iota=b$. Indeed, if $\iota<b$ and $W(\iota)=0$, then in particular $\iota<a$, since $W'<0$ on $(a,b)$. Let $\{z_k\}_{k\in\N}\subset\R^n$ be a sequence such that $\lim_{k\to\infty} u (z_k)=\iota$, and set $w_k(y)=u(z_k+y)$. Proceeding as previously, we obtain that (up to subsequence), $w_k$ converges in $C^2_{\mathrm{loc}}(\R^n)$ to an entire solution $w_\infty\equiv \iota$. In particular, given $R_0=\Lambda+1$ (cf. \eqref{exrad}) and $\eta>0$ such that $\iota+\eta<a$, we have $u(x)\in [\iota,\iota+\eta]$, provided that $x \in B_{R_0}(z_k)\cap D$, and $k\geq \tilde K(\eta)$ is large enough (we note that, in view of \eqref{exrad}, $B_{R_0}(z_k)\cap D\ne \emptyset$). This is a contradiction. Therefore, we have proved that $\iota=b$, and $u \equiv b$.

\end{proof}

\begin{proof}[Proof of Corollary \ref{cor1}]
Under the assumptions of Corollary \ref{cor1}, we can apply Theorem \ref{th1} (ii) to deduce that $\lim_{d(x,\partial D)\to \infty}u(x)=b$. On the other hand, in view of Remark \ref{modic} we have $u\leq b$, so that $\sup_{\R^n}u=b$. Therefore, Theorem \ref{th2} implies that $u \equiv b$.
\end{proof}

\begin{proof}[Proof of Corollary \ref{cor2}]
When $n\geq 3$, we first apply Proposition \ref{p1b} to deduce that $\lim_{|x|\to\infty}u(x)=b$. Next, in view of Remark \ref{modic} we obtain that $\sup_{\R^n} u=b$. Finally, Theorem \ref{th2} implies that $u \equiv b$.
On the other hand, when $n=2$, $u$ is superharmonic in $\{x \in\R^2: |x|>R\}$. Setting $\gamma :=\min_{|x|=R+1} u(x)\in (a,b)$, we deduce from Lemma \ref{subharmonic}, that $u(x)\in [\gamma,b)$, provided that $|x|>R+1$. In view of Lemma \ref{lem1}, Remark \ref{modic} and Theorem \ref{th2}, we conclude as previously  that $u\equiv b$.
\end{proof}

\section{Radial symmetry for solutions converging to the local minimum: proofs of Proposition \ref{th-limb} and Theorem \ref{th-limb2}}\label{sec-limb}

In this section we give the proofs of Proposition \ref{th-limb} and Theorem \ref{th-limb2}. 

\begin{proof}[Proof of Proposition \ref{th-limb}] 
First we note that $v:=b-u>0$ is bounded and subharmonic outside $B_R$, in fact $-\Delta v=\Delta u=W'(b-v)\le 0$ in $\R^n\backslash B_R$, hence by \cite[Lemma 22]{FMR} we have the decay estimate 
\begin{equation}
\label{dec-v}
v(x)\le C|x|^{2-n}\qquad\forall,\ |x|\ge \rho.
\end{equation}
Next, it follows from \cite[Proposition 1, Theorem 4]{GNN} that $v$ is radial.
\end{proof}

\begin{proof}[Proof of Theorem \ref{th-limb2}]
First we show that $u<b$ in all $\R^n$. By the strong maximum principle, it is enough to prove that $u\le b$ in $\R^n$. For this purpose, assume by contradiction that $c:=\sup_{\R^n}u=\max_{\R^n}u>b$, and $W(c)>W(b)$. Next, define the auxiliary potential
\begin{equation*}
\tilde W(u)=\begin{cases}
W(b)&\text{for }  u\leq b\\
W(u)&\text{for } b\leq u\leq c\\
W(c)&\text{for } u\geq c,
\end{cases}
\end{equation*}
and consider a minimiser $\phi_R\in H^1(B_R(0))$ of the energy functional 
\begin{equation*}
\tilde E(v)=\int_{B_R(0)}\Big(\frac{1}{2}|\nabla v(x)|^2+\tilde W(v(x))\Big)d x,
\end{equation*}
in the class $\mathcal A=\{v\in H^1(B_R(0)), \, v=c \text{ on }\partial B_R(0)\}$. We know that $\phi_R$ is a radial solution to \eqref{scalar} such that $b<\min_{B_R(0)} \phi_R=\phi_R(0)<c$, for $R\geq R_0$ sufficiently large, since $\phi_R(0)\to b$ as $R\to\infty$. In addition, since $u(\R^n\setminus B_R(0))\subset (a,b)$, we have $u(x+x_0)< \phi_{R_0}(x)$ on $B_{R_0}(0)$, provided that $|x_0|>R+R_0$. Finally, by applying the sliding method of  Berestycki, Caffarelli, and Nirenberg \cite[Lemma 3.1]{BCN}, we deduce that $u \leq\phi_{R_0}(0)<c$ holds on $\R^n$, which is a contradiction.

So far we have established that $W(c)=W(b)$. To conclude that $u\leq b$, it remains to show that $c=b$. Indeed, if $c>b$ and $W(c)=W(b)$, then $c$ is a local minimum of $W$ satisfying $W'(c)=0$, and there exists $x_0\in \R^n$ such that $u(x_0)=c$. Thus, by the maximum principle, we obtain $u\equiv c$, which is excluded.

To complete the proof of Theorem \ref{th-limb2}, we shall use Proposition \ref{th-limb}, \cite[Theorem 2]{FMR}, and the regularity of $W$. We first assume that hypothesis (i) holds, and distinguish the following cases. 

a) If $W''(b)>0$, then $v:=b-u>0$ is a decaying entire solution to $$-\Delta v=f(v):=W'(b-v),$$
therefore it is radial by \cite[Theorem 2]{FMR}, since $f'(t)\le 0$ for $t\in(0,\delta)$. 

Otherwise, $W''(b)=0$ implies that $W'''(b)=0$, since $W\in C^6(\R)$, thus we shall examine the sign of $\frac{d^4  W}{d u^4}(b)$.

b) In the case where $\frac{d^4  W}{d u^4}(b)>0$, the radial symmetry of $u$ follows again from \cite[Theorem 2]{FMR}, since $f'(t)\le 0$ holds for $t\in(0,\delta)$. 

c) In the case where $\frac{d^4  W}{d u^4}(b)=0$, we have $\frac{d^5  W}{d u^4}(b)=0$, and $[b-\delta,b]\ni t\mapsto \frac{W'(t)}{|t-b|^5}$ is H\"{o}lder continuous. Moreover, $5 \ge \frac{n+2}{n-2}$ holds for every $n\ge 3$, hence the result follows from Proposition (\ref{th-limb}). 

Finally, in the case where hypothesis (ii) holds, the result is straightforward in view of \cite[Theorem 2]{FMR}.

\end{proof}

\section{A nonradial solution converging to the local maximum}\label{sec-lima}

In this section we will provide an example of a potential $W$ of the form (\ref{incl2w}) for which equation (\ref{scalar}) admits a solution $u$ such that $u(x)>a$ for $|x|>R$ and $\lim_{|x|\to\infty}u(x)=a$, but $u$ is not radial.\\

The counterexample can be found  in \cite{DMPP} using the Yamabe equation 
\begin{equation}
\label{Jamabe}
-\Delta u=\frac{n(n-2)}{4}|u|^{\frac{4}{n-2}}u \  \text{  in $\R^n$, $n\ge 3$.}
\end{equation}
Equation (\ref{Jamabe}) is variational, in the sense that it is the Euler-Lagrange equation of the energy functional
$$E(u):=\frac{1}{2}\int_{\R^n}|\nabla u|^2-\frac{(n-2)^2}{8}\int_{\R^n}|u|^{\frac{2n}{n-2}}.$$
It is known that the only finite energy positive solutions are given by
$$\mu^{-\frac{n-2}{2}}U(\mu^{-1}(x-\xi)),\qquad U(x):=\left(\frac{2}{1+|x|^2}\right)^{\frac{n-2}{2}},\,\mu>0,\,\xi\in\R^n.$$
These solutions which are called the \textit{standard bubbles}, are also the only positive solutions of \eqref{Jamabe} (see \cite{CGS}).

Using these bubbles, in \cite{DMPP} the authors construct a sequence of bounded entire solutions $\{u_k\}_{k\ge k_0}$ to (\ref{Jamabe}) in $\R^n$ of the form
\begin{equation}
u_k:=v_k+\phi_k,
\end{equation}
where the approximate solution $v_k$ is given by
\begin{equation}
\label{approx-sol}
\begin{aligned}
v_k(x)&:=U(x)-\sum_{j=1}^k \mu_k^{-\frac{n-2}{2}}U(\mu_k^{-1}(x-\xi_{j,k})),\\
\mu_k&=c_n k^{-2}\text{ for $n\ge 4$, $\mu_k=c_3 k^{-2}(\log k)^{-2}$ for $n=3$}\\
\xi_{j,k}&:=(\cos(\frac{2\pi j}{k}),\cos(\frac{2\pi j}{k}),0,\dots,0)\qquad 1\le j\le k
\end{aligned}
\end{equation}
and the corrections $\phi_k$ fulfil
\begin{equation}
\label{correction}
|\phi_k(x)|\le \frac{c}{\log k(1+|x|)}\text{ if $n=3$,}\qquad|\phi_k(x)|\le \frac{c}{k^{\alpha_n}(1+|x|^{n-2})}\text{ if $n\ge 4$, with $\alpha_n>0$.}
\end{equation}
As a consequence, these solutions $v_k$ are $L^\infty(\R^n)$ close to a linear combination of $k+1$ rescaled bubbles. One of them is positive and centred at the origin, the other ones are negative and centred along the unit circle $S^1\subset\R^2$. It particular, they are sign changing solutions. Moreover, it follows from (\ref{approx-sol}) and (\ref{correction}) that
\begin{equation}
u_k(x)\to 0 \text{ as $|x|\to\infty$, for any $k\ge k_0$}.
\end{equation}
We are going to check that $u_k$ is positive outside a ball.
\begin{lemma}
\label{lemma-uk>0}
There exist $\bar{r}>0$ and $\bar{k}>0$ such that $u_k(x)>0$ if $|x|>\bar{r}$ and $k\ge\bar{k}$.
\end{lemma}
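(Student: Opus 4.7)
The plan is to decompose $u_k = U - \Sigma_k + \phi_k$, where
$$\Sigma_k(x) := \sum_{j=1}^k \mu_k^{-(n-2)/2} U(\mu_k^{-1}(x-\xi_{j,k}))$$
collects the rescaled negative bubbles, and to argue that for $|x|$ and $k$ sufficiently large the positive bubble $U$ pointwise dominates both $\Sigma_k$ and $|\phi_k|$. This will yield $u_k > 0$ on $\{|x| > \bar r\}$.

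Since the centres $\xi_{j,k}$ lie in a bounded subset of $\R^n$ independent of $j,k$, I would first pick $\bar r > 0$ so large that $|x - \xi_{j,k}| \geq |x|/2$ holds for all $j,k$ whenever $|x| \geq \bar r$. Combined with the elementary lower bound $U(x) \geq |x|^{2-n}$, valid for $|x| \geq 1$, the problem reduces to establishing
$$\Sigma_k(x) + |\phi_k(x)| \leq \tfrac12 \, |x|^{2-n} \quad \text{on } \{|x| \geq \bar r\}, \text{ for all large } k.$$

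For the sum of bubbles I would use the identity
$$\mu_k^{-\frac{n-2}{2}} U(\mu_k^{-1}(x - \xi_{j,k})) = \left(\frac{2\mu_k}{\mu_k^2 + |x-\xi_{j,k}|^2}\right)^{\frac{n-2}{2}} \leq \left(\frac{8\mu_k}{|x|^2}\right)^{\frac{n-2}{2}},$$
and then sum over $j$ to obtain $\Sigma_k(x) \leq k (8\mu_k)^{(n-2)/2} |x|^{2-n}$. The prefactor $k\mu_k^{(n-2)/2}$ vanishes as $k \to \infty$ thanks to the choice of $\mu_k$ in (\ref{approx-sol}): it equals $c_n^{(n-2)/2} k^{3-n}$ when $n \geq 4$, and $c_3^{1/2}/\log k$ when $n = 3$. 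For the correction, the bound (\ref{correction}) gives $|\phi_k(x)| \leq c\, |x|^{2-n}/k^{\alpha_n}$ for $n \geq 4$ and $|\phi_k(x)| \leq c\,|x|^{-1}/\log k$ for $n = 3$ on the region $\{|x| \geq 1\}$. Choosing $\bar k$ large enough that both contributions are absorbed into $\tfrac12 |x|^{2-n}$ then yields $u_k(x) \geq \tfrac12 U(x) > 0$ on $\{|x| > \bar r\}$, as required.

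The main obstacle is the critical dimension $n = 3$, where $k\mu_k^{(n-2)/2}$ and the size of $\phi_k/U$ are both only logarithmically small; the argument precisely exploits the borderline rate $\mu_k = c_3 k^{-2}(\log k)^{-2}$ built into the construction of \cite{DMPP}. In the easier range $n \geq 4$ all decay rates are polynomial in $k$ and the comparisons are immediate.
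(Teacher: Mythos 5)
Your argument is correct and follows essentially the same route as the paper: you compare the central positive bubble $U$ against the sum of rescaled negative bubbles plus the correction $\phi_k$, exploiting the scaling $k\mu_k^{(n-2)/2}\to 0$ built into \eqref{approx-sol} (polynomial for $n\geq 4$, logarithmic for $n=3$) together with the decay of $\phi_k$ in \eqref{correction}. The only cosmetic difference is that the paper keeps $U(x)\sim(2/(1+r^2))^{(n-2)/2}$ as the yardstick and shows $v_k>\tfrac23 U$, whereas you normalise by $|x|^{2-n}$, but these are equivalent at infinity; the geometric input ($|x-\xi_{j,k}|\gtrsim|x|$ for $|x|$ large) and the final cancellation are identical.
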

\begin{proof}
We will show that, for $k$ large enough, the approximate solution $v_k$ fulfils
\begin{equation}
\label{approx-sol>0}
v_k(x)>\frac{2}{3}U(x)>0\qquad\text{if $|x|>\bar{r}$}
\end{equation}
for some large $\bar{r}>0$. Then we apply (\ref{correction}) to conclude that
$$u_k(x)=v_k(x)+\phi_k(x)>\frac{2}{3}U(x)-\frac{C}{(1+|x|)\log k}>\frac{1}{2}U(x)>0$$
outside a large ball in dimension $n\ge 3$. Similarly, in higher dimension we have
$$u_k(x)=v_k(x)+\phi_k(x)>\frac{2}{3}U(x)-\frac{C}{k^{\alpha_n}(1+|x|^{n-2})}>\frac{1}{2}U(x)>0$$
outside a large ball.\\

In order to prove (\ref{approx-sol>0}), we note that, in dimension $n=3$ we have
\begin{equation}\notag
\begin{aligned}
v_k(x)&=\left(\frac{2}{1+r^2}\right)^{\frac{1}{2}}-\sum_{j=1}^k\mu_k^{-\frac{1}{2}}U\left(\frac{x-\xi_{j,k}}{\mu_k}\right)\ge
\left(\frac{2}{1+r^2}\right)^{\frac{1}{2}}-k\mu_k^{-\frac{1}{2}}
\left(\frac{2\mu_k^2}{\mu_k^2+(r-1)^2}\right)^{\frac{1}{2}}  \\&\ge
\left(\frac{2}{1+r^2}\right)^{\frac{1}{2}}\left(1-k\mu_k^{\frac{1}{2}}\left(\frac{1+r^2}{(r-1)^2}\right)^{\frac{1}{2}}\right)=
\left(\frac{2}{1+r^2}\right)^{\frac{1}{2}}\left(1-\frac{\sqrt{c_3}}{\log k}\left(\frac{1+r^2}{(r-1)^2}\right)^{\frac{1}{2}}\right)
>\frac{2}{3}U(x)
\end{aligned}
\end{equation}
where $r=|x|$ and $k$ are large enough. Similarly, in higher dimension, we have
\begin{equation}\notag
v_k(x)\ge\left(\frac{2}{1+r^2}\right)^{\frac{n-2}{2}}\left(1-\frac{c_n^{\frac{n-2}{2}}}{k^{n-3} }\left(\frac{1+r^2}{(r-1)^2}\right)^{\frac{n-2}{2}}\right)
>\frac{2}{3}U(x).
\end{equation}
for $r$ and $k$ large enough. 
\end{proof}
Finally, we can take $b>\|u_{\bar{k}}\|_{L^\infty(\R^n)}$ and define a $C^1(\R)$ function $f$ such that $f(t)<0$ for any $t\in(0,b)$, $f(t)=-\frac{n(n-2)}{4}|t|^{\frac{4}{n-2}}t$ for $|t|\le \|u_{\bar{k}}\|_{L^\infty(\R^n)}$, and $f(b)=0$. Then $f'(0)=0$ and $u_{\bar{k}}$ is a solution to $\Delta u=f(u)$. Taking $W$ to be a primitive of $f$, we have the required counter example. In fact, we have $0<u_{\bar{k}}(x)<b$ in $D:=\R^n\backslash B_{\bar{r}}$, $u_{\bar{k}}\to 0$ as $|x|\to\infty$ but $u_{\bar{k}}$ is sign changing and not radial.

\section{Appendix}

\subsection{The method of sub- and supersolutions}\label{sec:A1}

Let $\Omega\subset\R^n$ be a open set with Lipschitz boundary, and let $f\in C^\alpha_{loc}(\R)$, for some $\alpha\in (0,1)$. We say that $\underline u \in W^{1,2}(\Omega)$ is a subsolution (respectively $\overline u \in W^{1,2}(\Omega)$ is a supersolution) to
\begin{equation}
\label{eq-sl}
\Delta u=f(u),
\end{equation}
if $\Delta \underline u\ge f(\underline u)$ (respectively $\Delta \overline u\le f(\overline u)$) holds in $\Omega$ in the weak sense.

\begin{proposition}\label{psup}
Let $\underline{u}\le \overline{u}$ be a couple of bounded $W^{1,2}(\Omega)$ sub- and supersolutions to 
\eqref{eq-sl}. Then, there exists a solution $u\in C^2(\Omega)\cap W^{1,2}(\Omega)$ to \eqref{eq-sl}, satisfying $\underline{u}\le u\le \overline{u}$.
\end{proposition}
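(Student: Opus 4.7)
I would prove this by the classical Perron/monotone iteration scheme, together with an exhaustion argument, since $\Omega$ is allowed to be unbounded. First, exhaust $\Omega$ by an increasing family of bounded Lipschitz subdomains $\Omega_k \Subset \Omega_{k+1}$ with $\bigcup_k \Omega_k = \Omega$. On each $\Omega_k$ I aim to construct a weak solution $u_k \in W^{1,2}(\Omega_k)$ of $\Delta u = f(u)$ satisfying $\underline{u}\le u_k\le \overline{u}$ a.e.\ in $\Omega_k$, with the trace condition $u_k = \underline{u}$ on $\partial \Omega_k$.

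For the bounded step, I would run a monotone iteration. Let $I = [\inf_\Omega \underline{u},\sup_\Omega \overline{u}]$, which is a bounded interval by assumption, and pick $M>0$ so large that $s\mapsto Ms - f(s)$ is nondecreasing on $I$ (when $f$ is merely in $C^\alpha_{\loc}$, a preliminary Lipschitz approximation of $f$ on $I$ makes this possible; Schauder regularity will later remove the approximation). Set $u_0 = \underline{u}|_{\Omega_k}$, and iteratively define $u_{j+1}$ as the unique $W^{1,2}(\Omega_k)$ solution of the linear coercive problem
\begin{equation*}
-\Delta u_{j+1} + M u_{j+1} = M u_j - f(u_j) \quad \text{in } \Omega_k, \qquad u_{j+1} = \underline{u} \text{ on } \partial \Omega_k.
\end{equation*}
Using the subsolution inequality $-\Delta \underline{u}+M\underline{u}\le M\underline{u}-f(\underline{u})$, the analogous supersolution inequality, the weak maximum principle for $-\Delta+M$, and the chosen monotonicity of $Ms-f(s)$, one checks by induction that
\begin{equation*}
\underline{u}\le u_j \le u_{j+1}\le \overline{u} \quad \text{a.e.\ in }\Omega_k.
\end{equation*}
The pointwise monotone limit $u_k := \lim_j u_j$ is bounded, and a standard elliptic bootstrap (the right-hand side lies in $L^\infty$, hence $u_j \in C^{1,\alpha}$ by $W^{2,p}$ theory; then $f(u_j)\in C^\alpha_{\loc}$ yields $u_j\in C^{2,\alpha}_{\loc}$ by Schauder) upgrades the convergence to $C^2_{\loc}(\Omega_k)$, so that $u_k\in C^2(\Omega_k)\cap W^{1,2}(\Omega_k)$ solves $\Delta u_k = f(u_k)$ and satisfies $\underline{u}\le u_k\le \overline{u}$.

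Finally, I would let $k\to\infty$. The uniform $L^\infty$ bound $\|u_k\|_{L^\infty(\Omega_k)}\le \max(\|\underline{u}\|_\infty,\|\overline{u}\|_\infty)$, together with interior Schauder estimates on any compact $K\Subset\Omega$ (which lies in all $\Omega_k$ for $k$ large), gives uniform $C^{2,\alpha}(K)$ bounds. A diagonal extraction produces a subsequence converging in $C^2_{\loc}(\Omega)$ to a function $u\in C^2(\Omega)$ solving $\Delta u = f(u)$, and the bound $\underline{u}\le u\le \overline{u}$ passes to the limit. Membership $u\in W^{1,2}(\Omega)$ follows from the $L^\infty$ bound and a Caccioppoli-type energy estimate using $\underline{u},\overline{u}\in W^{1,2}(\Omega)$ as competitors.

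The main obstacle is the bounded-domain step: namely, verifying that the iterates $u_j$ remain trapped between $\underline{u}$ and $\overline{u}$ requires exactly the right version of the weak maximum principle for $-\Delta+M$ on Lipschitz domains, and one must also handle the mild regularity of $f$ (only $C^\alpha_{\loc}$) by an approximation or truncation before invoking monotonicity of $s\mapsto Ms-f(s)$.
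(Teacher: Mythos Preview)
Your plan is sound and is the other classical route to this result, but it is genuinely different from the paper's argument. The paper does not iterate at all: it truncates the nonlinearity, setting
\[
g(x,s)=\begin{cases} f(\underline u(x)), & s<\underline u(x),\\ f(s), & \underline u(x)\le s\le\overline u(x),\\ f(\overline u(x)), & s>\overline u(x),\end{cases}
\]
and then directly minimises $\mathcal E(v)=\int_\Omega\bigl(\tfrac12|\nabla v|^2+G(x,v)\bigr)\,dx$ over $\underline u+W^{1,2}_0(\Omega)$, where $G(x,u)=\int_0^u g(x,t)\,dt$. The minimiser solves $\Delta u=g(x,u)$, and a weak maximum-principle comparison with $\underline u$ and $\overline u$ forces $\underline u\le u\le\overline u$, so that a posteriori $g(x,u)=f(u)$. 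The passage to unbounded domains is handled separately after the proposition, by exactly the exhaustion-and-diagonal argument you describe.

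The practical difference is that the truncation-plus-minimisation route needs nothing from $f$ beyond local H\"older continuity: boundedness of $g$ makes $\mathcal E$ coercive and weakly lower semicontinuous, and no monotonicity is used. Your monotone scheme, by contrast, genuinely requires $s\mapsto Ms-f(s)$ to be nondecreasing on $[\inf\underline u,\sup\overline u]$, which fails for every $M$ when $f$ is merely $C^\alpha$. The Lipschitz approximation you flag is not entirely free, since a uniform perturbation $f_\epsilon$ of $f$ need not keep \emph{both} $\underline u$ and $\overline u$ as barriers for $\Delta v=f_\epsilon(v)$ (a one-sided regularisation fixes one inequality and spoils the other); one then has to perturb the barriers as well and pass to the limit by compactness. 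This is all doable, but it is exactly the nuisance the variational argument sidesteps. On the upside, your iteration is constructive and produces the \emph{minimal} solution above $\underline u$, information the energy minimisation does not provide.
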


\begin{proof}
We introduce the nonlinearity
\begin{equation}\label{nonlin2}
g(x,u):=
\begin{cases}
f(\underline{u}(x)) &\text{if } u<\underline{u}(x),\\
f(u) &\text{if }  \underline{u}(x)\le u\le\overline{u}(x),\\
f(\overline{u}(x)) &\text{if } u>\overline{u}(x),
\end{cases}
\end{equation}
and set $G(x,u)=\int_0^u g(x,t) dt$. Next, we establish (exactly as in the proof of  \cite[Lemma 1.1.1]{Dup}), the existence of a minimizer $u$ of the energy functional:
\begin{equation}\label{ener2}
\mathcal{E}(v)=\int_\Omega\big(\frac{1}{2}|\nabla v(x)|^2+G(x,v(x))\big)dx,
\end{equation}
in the class $\mathcal{A}=\underline{u}+W^{1,2}_0(\Omega)$. For the sake of simplicity, we consider in the definition of $\mathcal A$, the boundary condition $v=\underline u$ on $\partial \Omega$. However, we could also set $\mathcal{A}=\phi+W^{1,2}_0(\Omega)$, with any $\phi\in W^{1,2}(\Omega)$ such that $\underline u\leq \phi \leq \overline u$ holds on $\partial \Omega$. By construction, $u$ solves the Euler-Lagrange equation 
\begin{equation}\label{eul2}
\Delta u =g(x,u), \ x \in \Omega.
\end{equation}
 Moreover, it follows from the maximum principle that $\underline{u}\le u\le \overline{u}$ in $\Omega$, which yields that $u$ is actually a $C^2(\Omega)$ solution to \eqref{eq-sl}, satisfying $\underline{u}\le u \le \overline{u}$.
\end{proof}

\begin{remark}\label{rrad} 
If in Proposition \ref{psup}, we consider a domain $\Omega=\{x\in \R^n: \rho_1<|x|<\rho_2  \}$, and a couple $\underline{u}\le \overline{u}$ of bounded \emph{radial} sub- and supersolutions to  
\eqref{eq-sl}, then we obtain the existence of a \emph{radial} solution $u\in C^2(\Omega)\cap W^{1,2}(\Omega)$ to \eqref{eq-sl}, satisfying $\underline{u}\le u\le \overline{u}$. Indeed, since the nonlinearity \eqref{nonlin2} and the energy functional \eqref{ener2} are invariant by the orthogonal group $O(n)$, we can look for a minimizer $u$ in the class $\mathcal{A}_{O(n)}=\{v \in \mathcal{A}: v(\sigma x)=v(x), \forall \sigma \in O(n)\}$. By the principle of symmetric criticality \cite{palais}, $u$ is a smooth \emph{radial} solution to \eqref{eul2}, and the bounds $\underline{u}\le u\le \overline{u}$ follow as previously from the maximum principle.
\end{remark}

The method of sub- and supersolutions is also applicable in unbounded domains. In Proposition \ref{p1b}, we apply it in $\Omega=\R^n\setminus B_\rho$, with a radial subsolution $\phi_*(x)= c|x|^{2-n}$, and a radial supersolution $\phi^*\geq\phi_*$, $\phi^*\in W^{1,2}(B_R\setminus \overline B_\rho)$, $\forall R>\rho$. As a consequence of Proposition \ref{psup} and Remark \ref{rrad}, we obtain for every $R>\rho$, a radial solution $v_R$ to \eqref{scalar} in $\Omega_R:=B_R\setminus \overline B_\rho$, satisfying
\begin{itemize}
\item $\phi_*\leq v_R\leq \phi^*$ in $\Omega_R$, 
\item $v_R=\phi_*$ on $\partial \Omega_R$.
\end{itemize}
 In addition, since for any $\alpha\in(0,1)$, the $C^{1,\alpha}$ norm of $\partial \Omega_R$ is uniformly bounded, and the $C^{1,\alpha}$ norm of $\phi_*$ is also bounded in $\overline\Omega$, we deduce that the $C^{1,\alpha}$ norm of $v_R$ is uniformly bounded in $\overline{\Omega_R}$, $\forall R>\rho$ (cf. \cite[Theorem 8.33]{1987130}). Finally, we use the Theorem of Ascoli, via a diagonal argument, to prove that the limit $v=\lim_{R\to\infty} v_R$ exists (up to subsequence) and is a radial solution to \eqref{scalar} in $\Omega$, satisfying $\phi_*\leq v\leq \phi^*$ in $\Omega$.

In Proposition \ref{p2b}, we have a second application of the method of sub- and supersolutions in an unbounded domain $D$, such that $\partial D$ is bounded for the $C^{1,\alpha}$ norm. Here again, we consider an increasing sequence of bounded domains $D_k$, such that $D=\cup_k D_k$, and the boundaries $\partial D_k$ are uniformly bounded for the $C^{1,\alpha}$ norm. In view of Proposition \ref{psup}, we obtain in each domain $D_k$ a solution $u_k$ of \eqref{scalar}, and then by taking the limit $u=\lim_{k\to\infty} u_k$ via the same diagonal argument, we construct the solution $u$ in the whole domain $D$.

\subsection{Two lemmas for superharmonic functions}

Here we recall two classical results on superharmonic functions.
\begin{lemma}
\label{prop-lower-bound-superharm}
Let $n\geq 3$, let $B_\rho\subset\R^n$ be the open ball of radius $\rho$ centered at the origin, and let $u \in C^2(\R^n\setminus B_\rho)$ be a positive and bounded function, such that $\Delta u\le 0$ in $\R^n\backslash B_\rho$. Then, there exists a constant $c>0$ such that $u(x)\ge c|x|^{2-n}$, for any $x\in\R^n\backslash B_\rho$. 
\end{lemma}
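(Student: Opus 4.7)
The plan is to compare $u$ with explicit radial harmonic barriers on bounded annuli and pass to the limit as the outer radius tends to infinity. Since $u$ is continuous (being $C^2$ up to $\partial B_\rho$) and positive on the compact sphere $\{|x|=\rho\}$, I first set
\[
m := \min_{|x|=\rho} u(x) > 0.
\]

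For each $R > \rho$, I would introduce on the closed annulus $A_R := \{\rho \le |x| \le R\}$ the radial function
\[
v_R(x) := m\,\frac{|x|^{2-n} - R^{2-n}}{\rho^{2-n} - R^{2-n}}.
\]
Since the general radial harmonic function in dimension $n \ge 3$ is of the form $\alpha + \beta |x|^{2-n}$, a direct check shows that $v_R$ is harmonic on $A_R$ with boundary values $v_R = m$ on $|x|=\rho$ and $v_R = 0$ on $|x|=R$. Setting $w_R := u - v_R$, I would observe that $\Delta w_R = \Delta u \le 0$ on $A_R$; on the inner sphere $w_R = u - m \ge 0$ by the choice of $m$; and on the outer sphere $w_R = u > 0$ by positivity of $u$. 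Since $A_R$ is bounded and $w_R$ is continuous on $\overline{A_R}$ and superharmonic inside, the weak maximum principle gives $w_R \ge 0$ on $A_R$, i.e.\ $u \ge v_R$ on $A_R$.

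Fixing $x$ with $|x| \ge \rho$ and letting $R \to \infty$ in the inequality $u(x) \ge v_R(x)$, the hypothesis $n \ge 3$ ensures $R^{2-n} \to 0$, so
\[
u(x) \;\ge\; m\,\rho^{n-2}\,|x|^{2-n} \;=:\; c\,|x|^{2-n},
\]
with $c = m\rho^{n-2} > 0$; the boundary case $|x| = \rho$ is immediate from $u(x) \ge m = c\rho^{2-n}$.

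I do not foresee a genuine obstacle: the only delicate point is the unboundedness of $\R^n \setminus B_\rho$, which is bypassed by the annular exhaustion above. The hypothesis $n \ge 3$ is essential, as it is what makes the fundamental-solution barrier $|x|^{2-n}$ decay at infinity so that the limiting inequality is nontrivial; note that the boundedness of $u$ stated in the lemma is in fact not needed by this argument, only positivity and the $C^2$ regularity up to $\partial B_\rho$.
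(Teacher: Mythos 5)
Your proof is correct and follows essentially the same route as the paper's: comparison with a radial harmonic barrier on a bounded annulus via the weak maximum principle, then passing to a limit. The only cosmetic difference is that you use the exact two-point barrier $v_R$ vanishing on $\partial B_R$ and send $R\to\infty$, whereas the paper fixes the barrier $c|x|^{2-n}-\eps$ (negative on $\partial B_R$ for large $R$) and sends $\eps\to 0$; your observation that the boundedness of $u$ is not actually needed is accurate and applies to the paper's argument as well.
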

\begin{proof}
We fix $y\in\R^n\backslash \overline{B_\rho(0)}$, $\eps>0$ and we prove that $u(y)\ge c|y|^{2-n}-\eps$, for some constant $c>0$ independent of $\eps$, so that the result follows by letting $\eps\to 0$.\\
In order to do so, we note that $$u(x)\ge \inf_{\partial B_\rho}u=:c\rho^{2-n}=c|x|^{2-n}>c|x|^{2-n}-\eps\qquad\forall\,x\in\partial B_\rho.$$
Moreover, taking $R>|y|$ large enough, we have $$c|x|^{2-n}-\eps<0<u(x)\qquad\forall\, x\in\partial B_R.$$
As a consequence, using that $c|x|^{2-n}-\eps$ is harmonic in the set $A:=\{x\in\R^n:\,\rho<|x|<R\}$, the maximum principle yields that $u\ge c|x|^{2-n}-\eps$ in $A$. In particular we have $u(y)\ge c|y|^{2-n}-\eps$.
\end{proof}

\begin{lemma}\label{subharmonic}
Let $B_r(0)\subset \R^2$ be the open ball of radius $r$ centred at the origin, and let $\psi\in C(\R^2\setminus B_r(0))$ be a function such that
\begin{itemize}
\item $\psi \in W^{1,2}_{loc}(\R^2\setminus \overline{B_r(0)})$,
\item $\psi$ is bounded from below on $\R^2\setminus B_r(0)$,
\item $\Delta\psi\leq 0$, on $\R^2\setminus\overline{ B_r(0) }$.
\end{itemize}
Then, $\psi$ attains its minimum on $\partial B_r(0)$.\end{lemma}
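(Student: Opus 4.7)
The plan is to use a logarithmic barrier, exploiting the fact that the fundamental solution of $-\Delta$ in $\R^2$ diverges at infinity. Set $m:=\min_{\partial B_r(0)}\psi$, which exists by continuity and compactness, and $L:=\inf_{\R^2\setminus B_r(0)}\psi$, which is finite by the lower-bound hypothesis. It clearly suffices to prove that $\psi(x)\geq m$ for every $x$ with $|x|>r$.

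For each $\eps>0$, consider the auxiliary function
\[
w_\eps(x):=\psi(x)+\eps\log\frac{|x|}{r},\qquad |x|\geq r.
\]
Since $\log|x|$ is smooth and harmonic on $\R^2\setminus\{0\}$, one has $\Delta w_\eps=\Delta\psi\leq 0$ weakly on $\R^2\setminus\overline{B_r(0)}$, so $w_\eps$ is again superharmonic and continuous up to $\partial B_r(0)$. On the inner boundary $w_\eps=\psi\geq m$ by definition of $m$, while on $\partial B_R(0)$ the lower bound $\psi\geq L$ yields $w_\eps\geq L+\eps\log(R/r)$, which exceeds $m$ as soon as $R\geq R_\eps:=r\exp((m-L)/\eps)$.

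Fixing such an $R$, the weak minimum principle applied to $w_\eps$ on the bounded annulus $A_R:=B_R(0)\setminus\overline{B_r(0)}$, together with the two boundary estimates above, gives $w_\eps\geq m$ throughout $A_R$. For an arbitrary point $x_0$ with $|x_0|>r$, choosing $R>\max(|x_0|,R_\eps)$ and evaluating at $x_0$ produces
\[
\psi(x_0)\;\geq\; m-\eps\log\frac{|x_0|}{r},
\]
and letting $\eps\to 0^+$ delivers $\psi(x_0)\geq m$, which is the desired conclusion.

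The crux of the argument, and the reason it is genuinely a two-dimensional phenomenon, is the interplay between the unboundedness of $\log|x|$ at infinity and the assumed lower bound on $\psi$: it is precisely this lower bound that allows the arbitrarily small multiple $\eps\log(R/r)$ of the logarithmic corrector to dominate $m-\psi$ on $\partial B_R$ for $R$ large, making the bounded-domain minimum principle applicable. A minor technical point to handle is that $W^{1,2}_{loc}$-regularity is only assumed on $\R^2\setminus\overline{B_r(0)}$ rather than up to $\partial B_r(0)$; one can accommodate this either by invoking the minimum principle for continuous distributional supersolutions, or by first running the argument on a slightly shrunken annulus $B_R\setminus\overline{B_{r'}}$ with $r'>r$ and then letting $r'\to r^+$ using continuity of $\psi$ on $\partial B_r(0)$.
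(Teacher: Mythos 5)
Your proof is correct and is essentially identical to the paper's: both introduce the logarithmic barrier $\psi(x)+\eps\log(|x|/r)$, use the lower bound on $\psi$ to push the corrected function above $\min_{\partial B_r}\psi$ on $\partial B_R$ for $R$ large, apply the minimum principle on the annulus, and let $\eps\to 0$. The only (cosmetic) difference is that you explicitly flag the $W^{1,2}_{\mathrm{loc}}$ boundary-regularity subtlety and indicate how to circumvent it, which the paper leaves implicit.
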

\begin{proof}
Let $x_0\in \partial B_r(0)$ be such that $\min_{\partial B_r(0)}\psi=\psi(x_0)$. For every $\epsilon>0$ fixed, we consider the function $\zeta_\epsilon(x)=\psi(x)+\epsilon \ln(|x|/r)$ which is superharmonic on $\R^2\setminus \overline{B_r(0)}$. In addition, 
we have $\zeta_\epsilon(x)> \zeta_\epsilon(x_0)=\psi(x_0)$, provided that $|x|\geq R_\epsilon$ (with $R_\epsilon$ sufficiently large). Thus, by the maximum principle, the minimum of $\zeta_\epsilon$ in the annuli $r\leq |x|\leq R$, with $R\geq R_\epsilon$, is attained at $x_0$. 
This implies, that for every $\epsilon>0$, and $ x \in\R^2\setminus B_r(0) $, we have $\zeta_\epsilon(x)\geq\psi (x_0)\Leftrightarrow \psi(x)\geq \psi (x_0)-\epsilon \ln (|x|/r)$. Finally, letting $\epsilon\to 0$, we obtain that $\psi(x)\geq \psi (x_0)$ holds for every $ x \in\R^2\setminus B_r(0) $.
\end{proof}

\section*{Acknowledgements}
M. Rizzi was partially supported by Justus Liebig University. The authors are particularly grateful to prof. Alberto Farina for his precious remarks and comments.

\end{document}